\newcommand{\de}{\partial}
\newcommand{\ddbar}{\sqrt{-1} \partial \overline{\partial}}
\newcommand{\Ric}{\mathrm{Ric}}
\newcommand{\ov}[1]{\overline{#1}}
\newcommand{\tr}[2]{\mathrm{tr}_{#1}{#2}}
\newcommand{\ti}[1]{\tilde{#1}}
\newcommand{\vp}{\varphi}
\newcommand{\vol}{\mathrm{Vol}}
\newcommand{\ve}{\varepsilon}
\renewcommand{\leq}{\leqslant}
\renewcommand{\geq}{\geqslant}
\renewcommand{\le}{\leqslant}
\numberwithin{equation}{section}
\begin{document}
\newtheorem{claim}{Claim}
\newtheorem{theorem}{Theorem}[section]
\newtheorem{lemma}[theorem]{Lemma}
\newtheorem{corollary}[theorem]{Corollary}
\newtheorem{proposition}[theorem]{Proposition}
\newtheorem{question}{question}[section]
\newtheorem{conjecture}[theorem]{Conjecture}
\newtheorem{definition}[theorem]{Definition}

\theoremstyle{definition}
\newtheorem{remark}[theorem]{Remark}
\title[Finite time collapsing]{Finite time collapsing of the K\"ahler-Ricci flow on threefolds}

\author[V. Tosatti]{Valentino Tosatti}
\address{Department of Mathematics, Northwestern University, 2033 Sheridan Road, Evanston, IL 60208}
\email{tosatti@math.northwestern.edu}
\author[Y. Zhang]{Yuguang Zhang}
\address{Yau Mathematical Sciences Center,  Tsinghua University,  Beijing 100084, P.R.China.}
\email{yuguangzhang76@yahoo.com}

\begin{abstract}
We show that if on a compact K\"ahler threefold there is a solution of the K\"ahler-Ricci flow which has finite time collapsing, then the manifold admits a Fano fibration. Furthermore, if there is finite time extinction then the manifold is Fano and the initial class is a positive multiple of the first Chern class.
\end{abstract}
\maketitle

\section{Introduction}
A Fano manifold is a compact complex manifold $X$ with ample anticanonical bundle $-K_X=\Lambda^{\dim X}T^{1,0}X$. Thanks to the Calabi-Yau Theorem \cite{Ya}, a compact complex manifold is Fano if and only if it admits a K\"ahler metric with positive Ricci curvature.

A compact K\"ahler manifold $X$ is said to admit a Fano fibration if there exists a surjective holomorphic map $f:X\to Y$ with connected fibers, where $Y$ is a compact normal K\"ahler space with $0\leq \dim Y<\dim X$ and such that $-K_X$ is $f$-ample (see Definition \ref{deff} below for the definition of $f$-ampleness). In this case the generic fiber of $f$ is a Fano manifold of dimension $\dim X-\dim Y$.

The simplest example of a Fano fibration is when $Y$ is a point, and $X$ is a Fano manifold. Other simple examples are obtained by taking $X=F\times Y$ where $F$ is a Fano manifold and $Y$ is any compact K\"ahler manifold. Fano fibrations are more general than Mori fiber spaces, where one requires in addition that the relative Picard number of $f$ be equal to $1$.

It is easy to see that if $X$ admits a Fano fibration then there exists a K\"ahler metric $\omega_0$ on $X$ such that the (unnormalized) K\"ahler-Ricci flow
\begin{equation}\label{krf}
\left\{
                \begin{aligned}
                  &\frac{\de}{\de t}\omega(t)=-\Ric(\omega(t))\\
                  &\omega(0)=\omega_0
                \end{aligned}
              \right.
\end{equation}
starting at $\omega_0$ has a solution defined only for a finite time $T>0$, and $\vol(X,\omega(t))\to 0$ as $t\to T$ (see right after the proof of Lemma \ref{hart} for details). We say that the K\"ahler-Ricci flow has {\em finite time collapsing}, by which we simply mean that the total volume of the manifold approaches zero in some finite time. It is known that if this happens then necessarily the Kodaira dimension of $X$ is $-\infty$ (see \cite[Proposition 4.2]{CT}).
See also \cite{CT, FIK, Fo, Fo2,IK, LT,So, So2, SSW, SW1, SY, Ti, Ti2, Z3, Z2} and references therein for more results on finite time singularities of the K\"ahler-Ricci flow.

In this article we study the converse question, and we formulate precisely the following conjecture which has been part of the folklore of the subject:
\begin{conjecture}\label{main2}
Let $X^n$  be a compact K\"ahler manifold. Then there exists a K\"ahler metric $\omega_0$ such that the K\"ahler-Ricci flow \eqref{krf} has finite time collapsing if and only if $X$ admits a Fano fibration $f:X\to Y$. In this case, we can write
\begin{equation}\label{fact}
[\omega_0]=-2\pi T c_1(K_X)+f^*[\omega_Y],
\end{equation}
for some K\"ahler metric $\omega_Y$ on $Y$, where $T$ is the maximal existence time of the flow.
\end{conjecture}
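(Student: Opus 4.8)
The plan is to prove the two implications separately. The implication ``$X$ admits a Fano fibration $\Rightarrow$ some $\omega_0$ has finite time collapsing'' is the elementary one, and is essentially the computation carried out right after the proof of Lemma~\ref{hart}: given a Fano fibration $f:X\to Y$, pick a K\"ahler class $[\omega_Y]$ on $Y$ and $T>0$ small enough that the class $-2\pi Tc_1(K_X)+f^*[\omega_Y]$ contains a K\"ahler metric $\omega_0$; this is possible because $-K_X$ is $f$-ample (Definition~\ref{deff}), so $-2\pi Tc_1(K_X)$ is positive along the fibres of $f$ while $f^*[\omega_Y]$ dominates the horizontal directions once $T$ is small. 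Since the maximal existence time of \eqref{krf} is $\sup\{t>0:[\omega_0]+2\pi tc_1(K_X)\text{ is K\"ahler}\}$ and $[\omega_0]+2\pi Tc_1(K_X)=f^*[\omega_Y]$ lies on the boundary of the K\"ahler cone (it vanishes on the positive-dimensional fibres of $f$), the flow exists exactly on $[0,T)$, and $\vol(X,\omega(t))=\tfrac1{n!}[\omega(t)]^n\to\tfrac1{n!}(f^*[\omega_Y])^n=0$ since $\dim Y<n$. This also records the factorization \eqref{fact} for the constructed $\omega_0$.

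For the converse, let $\omega_0$ be any K\"ahler metric for which \eqref{krf} collapses at the finite time $T=\Tmax$, and set $\alpha:=[\omega_0]+2\pi Tc_1(K_X)$, which is nef with $\alpha^n=n!\lim_{t\to T}\vol(X,\omega(t))=0$. Writing $\alpha=2\pi T(c_1(K_X)+\beta')$ with $\beta':=\tfrac1{2\pi T}[\omega_0]$ a K\"ahler class, the class $K_X+\beta'$ is nef and satisfies $(K_X+\beta')^n=0$, while by \cite[Proposition~4.2]{CT} we know $\Kod(X)=-\infty$. The key step is then to apply the base-point-free/abundance theorem for klt K\"ahler pairs, which in dimension $n=3$ is available thanks to the work of H\"oring--Peternell (and collaborators) on the minimal model program and abundance for K\"ahler threefolds: since $K_X+\beta'$ is nef it is \emph{semiample}, so there is a surjective holomorphic map $f:X\to Y$ with connected fibres onto a normal compact K\"ahler space, together with a K\"ahler class $[\omega_Y]$ on $Y$, such that $K_X+\beta'=f^*[\omega_Y]$.

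It remains to read off the conclusions. Pulling back powers of $[\omega_Y]$ shows that the numerical dimension of $K_X+\beta'$ equals $\dim Y$, and since $(K_X+\beta')^n=0$ we get $0\le\dim Y\le n-1$. If $\dim Y=0$ then $K_X+\beta'=0$, i.e.\ $-K_X=\beta'$ is a K\"ahler class, so $X$ is Fano and $[\omega_0]=-2\pi Tc_1(K_X)$; this is the finite time extinction case. If $1\le\dim Y\le n-1$, then on a general (hence smooth) fibre $F$ we have $-K_F=-K_X|_F=\beta'|_F$, which is the restriction of a K\"ahler class, so $F$ is a Fano manifold; moreover $-K_X+f^*[\omega_Y]=\beta'$ is K\"ahler, so $-K_X$ is $f$-ample and $f:X\to Y$ is a Fano fibration. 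In either case $[\omega_0]=2\pi T\beta'=2\pi T\bigl(f^*[\omega_Y]-c_1(K_X)\bigr)=-2\pi Tc_1(K_X)+f^*[2\pi T\omega_Y]$, which is the desired factorization \eqref{fact}.

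The main obstacle is the semiampleness step. The ``boundary'' $\beta'=\tfrac1{2\pi T}[\omega_0]$ is a genuinely transcendental K\"ahler class, not the first Chern class of a $\mathbb Q$-divisor, so one cannot invoke the classical algebraic base-point-free theorem directly, and must instead use (or reprove, possibly via an approximation or MMP-with-scaling argument) its analogue in the transcendental K\"ahler minimal model program, while also ensuring that the target $Y$ is produced as a bona fide normal compact K\"ahler space. This is precisely the input that restricts the argument to threefolds, where the K\"ahler MMP and abundance are known; granting the conjectural K\"ahler MMP in higher dimensions, the same scheme would prove Conjecture~\ref{main2} in general. The remaining points --- connectedness of the fibres (Stein factorization), K\"ahlerness of $Y$, and smoothness and Fano-ness of the general fibre --- are routine once the fibration is in hand.
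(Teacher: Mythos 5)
Your easy direction is fine and matches the paper's argument after Lemma~\ref{hart}, and your reduction of the converse to the statement ``$c_1(K_X)+\beta'$ nef with $\beta'$ K\"ahler and top self-intersection zero implies $c_1(K_X)+\beta'=f^*[\omega_Y]$ for a fibration onto a normal compact K\"ahler space with $[\omega_Y]$ K\"ahler'' is exactly the paper's reduction to Conjecture~\ref{bpf}. The genuine gap is your ``key step'': you invoke a base-point-free/abundance theorem for transcendental adjoint classes on K\"ahler threefolds as if it were an available black box. It is not. What H\"oring--Peternell prove in \cite{HP,HP2} is the existence of minimal models and of Mori fibre space contractions (of extremal rays, under a normalization hypothesis on the K\"ahler class), not semiampleness of an arbitrary nef class of the form $c_1(K_X)+\beta'$; and ``abundance'' in the usual sense concerns $K_X$ itself being nef, which is irrelevant here since $K_X$ is not even pseudoeffective (Lemma~\ref{psef}). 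Establishing precisely the semiampleness statement you assume is the content of Theorem~\ref{three}, i.e.\ it is the main technical contribution of the paper, so as written your argument is circular at its central point.

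Concretely, the work you are skipping is the following. One first passes to the MRC fibration $X\dashrightarrow B$ (using \cite{Br} to get uniruledness): when $\dim B\le 1$ one shows $H^{2,0}(X)=0$, so $X$ is projective and $[\alpha]\in NS_{\mathbb R}(X)$, and the algebraic base-point-free theorem for $\mathbb R$-divisors applies (Proposition~\ref{ns}); the transcendental machinery is only needed when $\dim B=2$. There one must verify the normalization condition $\int_F\omega=2$ needed to apply \cite[Theorem 1.4]{HP2} (this uses \cite[Lemma 3.3]{HP2} plus the bigness contradiction with $\int_X\alpha^3=0$), obtain the contraction $f:X\to Y$, prove that $Y$ has rational singularities and is K\"ahler (relative Kawamata--Viehweg vanishing, Kov\'acs' criterion, Fujiki), and finally descend $[\alpha]$ to a genuine K\"ahler class on $Y$ --- which requires the auxiliary diagram through a smooth surface $Z$, the K\"ahler currents of \cite[Theorem 1.1]{CT}, and a regularized-maximum gluing near the images of the exceptional set. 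None of these steps is ``routine once the fibration is in hand,'' and the fibration itself is not produced by any off-the-shelf transcendental MMP result. Your closing paragraph correctly diagnoses where the difficulty lies, but identifying the obstacle is not the same as overcoming it.
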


Since $Y$ need not be smooth, in this statement a K\"ahler metric on $Y$ is in the sense of analytic spaces (see e.g. \cite{Bi,Fu2, HP, Va}). Also, the factor of $2\pi$ in \eqref{fact} is due to our definition of Ricci form, locally given by $\Ric(\omega)=-\ddbar\log\det(g)$, so that $[\Ric(\omega)]=2\pi c_1(X)$.

Recall now that a $(1,1)$ class $[\alpha]\in H^{1,1}(X,\mathbb{R})$ is called {\em nef} if it lies in the closure of the K\"ahler cone, i.e. if it is a limit of K\"ahler classes. For example, if $\omega(t)$ is a solution of the K\"ahler-Ricci flow on $X$ with $0\leq t<T<\infty$, then the limiting class $[\alpha]=\lim_{t\to T}[\omega(t)]$ is always nef.

We can now state the following conjecture in analytic geometry, which does not mention the K\"ahler-Ricci flow, and which is in fact equivalent to Conjecture \ref{main2} (see Theorem \ref{equiv}):
\begin{conjecture}\label{bpf}
Let $X^n$ be a compact K\"ahler manifold. Then $X$ admits a Fano fibration $f:X\to Y$ if and only if there exists a closed real $(1,1)$ form $\alpha$ with $[\alpha]\in H^{1,1}(X,\mathbb{R})$ nef, with $\int_X\alpha^n=0$ and with
$$[\alpha]+\lambda c_1(X)$$
a K\"ahler class, for some positive real number $\lambda$. In this case, we can write
\begin{equation}\label{fact2}
[\alpha]=f^*[\omega_Y],
\end{equation}
for some K\"ahler metric $\omega_Y$ on $Y$.
\end{conjecture}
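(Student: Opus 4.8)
The plan is to prove the two implications separately. The ``only if'' direction is elementary, while the ``if'' direction is the substantial one, which I would establish unconditionally only when $\dim X=3$, the dimension in which the required transcendental tools are available.

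For the ``only if'' direction, suppose $f:X\to Y$ is a Fano fibration and fix a K\"ahler metric $\omega_Y$ on $Y$ (in the analytic-space sense). Let $\alpha$ be a smooth closed real $(1,1)$-form on $X$ representing $f^*[\omega_Y]$, obtained by pulling back $\omega_Y$ (near the singular locus of $Y$ one writes $\omega_Y=\ddbar\psi$ for a smooth strictly plurisubharmonic $\psi$ in a local embedding and pulls back, getting a smooth semipositive form). Then $[\alpha]=f^*[\omega_Y]$ is nef, being the limit as $\ve\to 0$ of the K\"ahler classes $f^*[\omega_Y]+\ve[\omega_X]$ for any K\"ahler metric $\omega_X$ on $X$; and $\int_X\alpha^n=\int_X f^*(\omega_Y)^n=0$ because $\dim Y<\dim X$ makes $\omega_Y^n$ vanish on $Y$. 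Finally, by the definition of $f$-ampleness (Definition \ref{deff}), $-K_X+f^*[\omega'_Y]$ is a K\"ahler class on $X$ for any sufficiently positive K\"ahler metric $\omega'_Y$ on $Y$; taking $\omega_Y$ to be such an $\omega'_Y$ we get that $[\alpha]+c_1(X)$ is K\"ahler, so $\lambda=1$ works and \eqref{fact2} holds by construction.

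For the ``if'' direction, let $\alpha$ be as in the statement; after rescaling we may assume $\tfrac1\lambda[\alpha]+c_1(X)$ is K\"ahler. First I would show that $X$ is uniruled: otherwise $K_X$ is pseudoeffective, and since $c_1(X)=-c_1(K_X)$ the identity $[\alpha]=\lambda\bigl(\tfrac1\lambda[\alpha]+c_1(X)\bigr)+\lambda c_1(K_X)$ exhibits $[\alpha]$ as the sum of a K\"ahler class and a pseudoeffective class, hence as a big class; but a nef and big class on a compact K\"ahler manifold satisfies $\int_X\alpha^n=\vol(\alpha)>0$, contradicting $\int_X\alpha^n=0$. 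So $K_X$ is not pseudoeffective and $X$ is uniruled by the criterion of Boucksom, Demailly, P\u{a}un and Peternell. Next, $\tfrac1\lambda[\alpha]+c_1(X)$ is in particular nef and big, so $\alpha$ is a nef class satisfying the hypothesis of a transcendental base-point-free theorem; for $\dim X=3$ the K\"ahler minimal model program of H\"oring--Peternell (and Campana--H\"oring--Peternell) then yields that $\alpha$ is semiample, i.e.\ there is a holomorphic fibration $f:X\to Y$ with connected fibers onto a normal compact K\"ahler space $Y$, together with a K\"ahler class $[\omega_Y]$ on $Y$, such that $[\alpha]=f^*[\omega_Y]$. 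Since $[\alpha]=f^*[\omega_Y]$ forces $\dim Y=\nu(\alpha)$ and $\int_X\alpha^n=0$ forces $\nu(\alpha)<n$, we obtain $0\leq\dim Y<\dim X$ (when $\dim Y=0$ one has $[\alpha]=0$ and $c_1(X)$ K\"ahler, so $X$ is Fano and $f$ is the constant map, the extinction case).

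It remains to verify that $-K_X$ is $f$-ample, which identifies $f$ as a Fano fibration and gives \eqref{fact2}. For any irreducible curve $C$ contracted by $f$ we have $[\alpha]\cdot C=[\omega_Y]\cdot f_*C=0$, while $\tfrac1\lambda[\alpha]+c_1(X)$, being K\"ahler, has strictly positive intersection with every curve, so $c_1(X)\cdot C=\bigl(\tfrac1\lambda[\alpha]+c_1(X)\bigr)\cdot C>0$, and Definition \ref{deff} applies. The hard part, and the only place where $\dim X=3$ is genuinely needed, is the semiampleness of $\alpha$: one must upgrade the purely numerical positivity ``$\tfrac1\lambda[\alpha]+c_1(X)$ K\"ahler'' to the analytic statement that $\alpha$ is the pullback of a K\"ahler class from a normal compact K\"ahler space. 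This amounts to contracting the extremal face of $\alpha$-trivial curves, and the reason one insists that $[\alpha]+\lambda c_1(X)$ be genuinely K\"ahler rather than merely nef is precisely to force that contraction to be of fiber type and to take place on $X$ itself (rather than only on a birational model, which would not suffice for Conjecture \ref{bpf}); the contraction theorems making this work are known for K\"ahler threefolds but open in higher dimension, which is why the statement remains conjectural in general.
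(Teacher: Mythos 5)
Your ``only if'' direction is fine and matches the paper's (the paper routes it through Lemma \ref{hart}, but the content is the same). The ``if'' direction, however, has a genuine gap at exactly the point you yourself flag as the hard part: you assert that for $\dim X=3$ ``the K\"ahler minimal model program of H\"oring--Peternell \dots yields that $\alpha$ is semiample,'' as if a transcendental base-point-free theorem were available off the shelf. No such theorem exists in the sources this paper relies on; establishing that semiampleness \emph{is} the content of Theorem \ref{three}, and it requires a chain of arguments your proposal omits entirely. Concretely, the paper: (i) passes to the MRC fibration $X\dashrightarrow B$ and treats $\dim B=0,1$ separately by reducing to the projective/N\'eron--Severi case (Proposition \ref{ns}), since \cite[Theorem 1.4]{HP2} only applies when the general MRC fiber is $\mathbb{CP}^1$; (ii) for $\dim B=2$, verifies that the class $\frac{1}{\lambda}[\omega_0]$ is \emph{normalized} ($\int_F\omega=2$) using \cite[Lemma 3.3]{HP2} together with a bigness contradiction --- this is a hypothesis of \cite[Theorem 1.4]{HP2}, not a consequence of $[\alpha]+\lambda c_1(X)$ being K\"ahler; (iii) proves that the target $Y$ is a K\"ahler space with rational singularities via relative Kawamata--Viehweg vanishing and \cite{Ko}; and (iv) descends $[\alpha]$ to a genuine K\"ahler class $[\gamma]$ on $Y$ through the diagram involving $\Gamma$, $Z$ and the contraction $\nu$, using the K\"ahler currents of \cite{CT} and a regularized-maximum construction. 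None of this is automatic, and steps (iii)--(iv) are precisely where the transcendental (non--N\'eron--Severi) nature of $[\alpha]$ bites.

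Two smaller points. First, for uniruledness of a possibly non-projective K\"ahler threefold you cannot invoke \cite{BDPP}, which is a projective result; the paper uses Brunella's theorem \cite{Br}, which covers the K\"ahler case for $n\leq 3$. Second, your verification of $f$-ampleness from $c_1(X)\cdot C>0$ for contracted curves $C$ is not a valid application of Definition \ref{deff}: positivity on contracted curves does not by itself give relative ampleness. Once one knows $[\alpha]=f^*[\omega_Y]$ with $[\omega_Y]$ K\"ahler, the correct route is Lemma \ref{hart}: $f^*[\omega_Y]-\lambda c_1(K_X)=[\omega_0]$ is K\"ahler, so $-K_X$ restricted to each fiber carries a metric of positive curvature and Grauert's criterion gives $f$-ampleness.
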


In particular, if this conjecture holds, then $[\alpha]$ contains a smooth semipositive representative (cf. \cite[Conjecture 1]{Ti2}, where it is conjectured that this holds even without the assumption that $\int_X\alpha^n=0$).
As explained in section \ref{sectvol}, it is well-known that Conjecture \ref{bpf} is true if $X$ is projective and $[\alpha]\in H^2(X,\mathbb{Q})$, as a simple consequence of the base-point-free theorem \cite{KMM}, and in fact more generally when $X$ is projective and $[\alpha]$ belongs to the real N\'eron-Severi group $NS_{\mathbb{R}}(X)$. So our main interest in this problem is when the manifold $X$ is not projective, or when the class $[\alpha]$ is not in $NS_{\mathbb{R}}(X)$.

As an aside, we remark that an analogous statement as in Conjecture \ref{bpf} should be true when the class $[\alpha]$ satisfies instead $\int_X\alpha^n>0$. In this case there should exist a bimeromorphic morphism $f:X\to Y$ to a compact normal K\"ahler space $Y$ (of dimension $n$) such that \eqref{fact2} holds. This statement follows from the results in \cite{HP} if the extremal face in $\ov{NA}(X)$ of classes which intersect trivially with $[\alpha]$ is in fact a ray (see \cite{HP} for notation), but this is not the case in general.

We also consider the following related conjecture, raised by Tian \cite[Conjecture 4.4]{Ti} (see also \cite{So}).

\begin{conjecture}\label{main}
Let $(X^n,\omega_0)$ be a compact K\"ahler manifold, let $\omega(t)$ be the solution of the K\"ahler-Ricci \eqref{krf}, defined
on the maximal time interval $[0,T)$ with $T<\infty$. Then as $t\to T$ we have
\begin{equation}\label{point}
\mathrm{diam}(X,\omega(t))\to 0,
\end{equation}
if and only if
\begin{equation}\label{fano}
[\omega_0]=\lambda c_1(X),
\end{equation}
for some $\lambda>0$.
\end{conjecture}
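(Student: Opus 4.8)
The plan is to prove the two implications by quite different means. The direction \eqref{fano}$\Rightarrow$\eqref{point} I would obtain, in any dimension, from Perelman's a priori estimates, while for \eqref{point}$\Rightarrow$\eqref{fano} I would argue by contradiction and reduce to the structural results of this paper (the existence of a Fano fibration) together with the known behaviour of collapsing and of non-collapsing finite-time K\"ahler--Ricci flows.

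For \eqref{fano}$\Rightarrow$\eqref{point}: if $[\omega_0]=\lambda c_1(X)$ with $\lambda>0$, then $c_1(X)$ is a K\"ahler class, so $X$ is Fano, and since $[\omega(t)]=[\omega_0]-2\pi t\,c_1(X)=2\pi(T-t)c_1(X)$ we have $T=\lambda/(2\pi)$ and finite-time extinction. I would then pass to the parabolic rescaling $\ti\omega(s)=(T-t)^{-1}\omega(t)$, $s=-\log((T-t)/T)$, under which \eqref{krf} becomes the normalized K\"ahler--Ricci flow $\frac{\de}{\de s}\ti\omega=-\Ric(\ti\omega)+\ti\omega$ on the Fano manifold $X$ with $[\ti\omega(s)]\equiv 2\pi c_1(X)$, defined for all $s\in[0,\infty)$ by Cao's theorem. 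Perelman's estimates then give a uniform bound $\diam(X,\ti\omega(s))\le C$, and rescaling back yields $\diam(X,\omega(t))=(T-t)^{1/2}\diam(X,\ti\omega(s))\le C(T-t)^{1/2}\to 0$ as $t\to T$, which is \eqref{point}.

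For \eqref{point}$\Rightarrow$\eqref{fano}: assuming \eqref{point}, I want to show that the limiting nef class $[\alpha]:=\lim_{t\to T}[\omega(t)]=[\omega_0]-2\pi T\,c_1(X)$ vanishes, which is exactly \eqref{fano} with $\lambda=2\pi T$. Suppose $[\alpha]\ne 0$. If $\int_X\alpha^n>0$, then $[\alpha]$ is big and nef, hence K\"ahler on the complement of a proper analytic subset $V\subset X$; the standard second-order a priori estimates for the parabolic complex Monge--Amp\`ere equation, in the regime where the limiting class is big, then give a uniform lower bound $\omega(t)\ge c\,\omega_V$ on compact subsets of $X\setminus V$ for a fixed K\"ahler metric $\omega_V$ there, and fixing distinct $p,q\in X\setminus V$ we get $\diam(X,\omega(t))\ge d_{\omega(t)}(p,q)\ge c^{1/2}d_{\omega_V}(p,q)>0$ for $t$ near $T$, contradicting \eqref{point}. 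Hence $\int_X\alpha^n=0$, so the flow has finite-time collapsing, and since $[\alpha]$ is nef, $\int_X\alpha^n=0$, and $[\alpha]+2\pi T\,c_1(X)=[\omega_0]$ is K\"ahler, Conjecture \ref{bpf} (established here for threefolds) applies and gives a Fano fibration $f:X\to Y$ with $[\alpha]=f^*[\omega_Y]$ as in \eqref{fact2}; since $[\alpha]\ne 0$, necessarily $\dim Y\ge 1$. I would then invoke the collapsing estimates for the K\"ahler--Ricci flow, adapted to the (possibly singular) base $Y$, to obtain a uniform lower bound $\omega(t)\ge c\,f^*\omega_Y$ off a proper subvariety, whence for $p,q$ with $f(p)\ne f(q)$ we get $\diam(X,\omega(t))\ge c^{1/2}d_{\omega_Y}(f(p),f(q))>0$, again contradicting \eqref{point}. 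Therefore $[\alpha]=0$.

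The hard part is the second direction. Soft Riemannian control along the flow --- for instance the uniform lower bound for the scalar curvature on the finite interval $[0,T]$ --- does not by itself rule out either the non-collapsing case or collapse onto a positive-dimensional base; one genuinely needs the two complex-analytic estimates invoked above, namely non-degeneracy of $\omega(t)$ away from a proper analytic subset when the limiting class is big, and the transversal bound $\omega(t)\ge c\,f^*\omega_Y$ in the collapsing case. Both rest on the a priori theory of the K\"ahler--Ricci flow in the non-projective and possibly singular-base setting, which becomes tractable only once the geometry of $X$ --- the Fano fibration and the structure of its degeneracy locus --- has been pinned down; this is why the statement is addressed for threefolds, whereas the direction \eqref{fano}$\Rightarrow$\eqref{point} is dimension-free.
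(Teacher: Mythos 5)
Your proposal follows essentially the same route as the paper: Perelman's diameter bound (via \cite{ST}) for the direction \eqref{fano}$\Rightarrow$\eqref{point}, and for the converse, ruling out the non-collapsed case $\int_X\alpha^n>0$ via the convergence result of \cite{CT}, extracting the Fano fibration $f:X\to Y$ from Conjecture \ref{bpf}, and then a transversal lower bound $\omega(t)\ge c\,f^*\omega_Y$ to contradict extinction when $\dim Y\ge 1$. The one step you invoke rather than derive --- that Schwarz-lemma bound over a possibly singular, non-projective base $Y$ --- is precisely where the paper's technical work lies (adapting Song's argument by computing against local embeddings $U\hookrightarrow\mathbb{C}^N$ of the K\"ahler space $Y$), but your overall architecture matches the paper's proof.
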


Condition \eqref{point} is called {\em finite time extinction}, and Conjecture \ref{main} states that finite time extinction happens if and only if the manifold is Fano and the initial class is a positive multiple of the first Chern class. Of course, as we will see in section \ref{sectext},
condition \eqref{point} implies that $\vol(X,\omega(t))\to 0$ as $t\to T$, i.e. that finite time extinction implies finite time collapsing. In the setting of Conjecture \ref{main2}, finite time extinction corresponds to the case when $Y$ is just a point.

In general if \eqref{fano} holds, then \eqref{point} holds thanks to work of Perelman (see \cite{ST}), who proved the stronger result that under the volume-normalized flow the diameter remains uniformly bounded above.
If $[\omega_0]\in H^2(X,\mathbb{Q})$ (so $X$ is projective), then this conjecture was proved by Song \cite{So}.

Our main result is the following.

\begin{theorem}\label{main3}
If $n\leq 3$ then Conjectures \ref{main2}, \ref{bpf} and \ref{main} hold.
\end{theorem}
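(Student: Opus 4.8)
The plan is to reduce all three conjectures to the purely analytic-geometric statement of Conjecture \ref{bpf} in dimension $\leq 3$, and to prove that statement using the cone and contraction theory --- the minimal model program --- for compact K\"ahler threefolds. By Theorem \ref{equiv}, Conjectures \ref{main2} and \ref{bpf} are equivalent, so it is enough to establish Conjecture \ref{bpf} for $n\leq 3$ and then to deduce Conjecture \ref{main}. The ``if'' direction of Conjecture \ref{bpf} is elementary: for a Fano fibration $f:X\to Y$ the class $[\alpha]:=f^*[\omega_Y]$ is nef, has $\int_X\alpha^n=0$ because $\dim Y<n$, and $[\alpha]+\lambda c_1(X)$ is K\"ahler for small $\lambda>0$ since $-K_X$ is $f$-ample; this also yields \eqref{fact2}. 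As for Conjecture \ref{main}: if $[\omega_0]=\lambda c_1(X)$ then $X$ is Fano and \eqref{point} holds by Perelman's theorem recalled above, while conversely finite time extinction implies finite time collapsing, so Conjecture \ref{main2} produces a Fano fibration $f:X\to Y$ with $[\omega_0]$ of the form \eqref{fact}, and it remains to exclude $\dim Y\geq 1$ --- which follows by combining \eqref{fact} with collapsing estimates for the K\"ahler--Ricci flow showing that the convergence of $\omega(t)$ towards a metric on (the smooth part of) $Y$ forces $\diam(X,\omega(t))$ to stay bounded below by a positive multiple of $\diam(Y,\omega_Y)$.

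For the ``only if'' direction of Conjecture \ref{bpf} with $n=3$ --- the cases $n\leq 2$ being classical --- suppose $[\alpha]$ is nef with $\int_X\alpha^3=0$ and $[\omega_0]:=[\alpha]+\lambda c_1(X)$ K\"ahler. Two observations: $K_X$ is not nef, for otherwise $[\alpha]=[\omega_0]+\lambda c_1(K_X)$ would be a sum of a K\"ahler class and a nef class, hence K\"ahler, contradicting $\int_X\alpha^3=0$; and, if $[\alpha]\neq0$, every irreducible curve $C$ with $[\alpha]\cdot C=0$ satisfies $c_1(K_X)\cdot C=-\tfrac{1}{\lambda}[\omega_0]\cdot C<0$, so the face $F$ of $\ov{NA}(X)$ orthogonal to $[\alpha]$ lies in the $K_X$-negative locus, where the cone is locally rational polyhedral by the cone theorem of H\"oring--Peternell \cite{HP}. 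If $[\alpha]=0$ then $X$ is Fano and $Y$ is a point. If $[\alpha]\neq0$ then $[\alpha]$ is nef but not K\"ahler (else $\int_X\alpha^3>0$), so $F\neq0$, and the crux is to show, by running the minimal model program for K\"ahler threefolds \cite{HP} to contract the $K_X$-negative extremal rays lying in $F$, that there is a holomorphic fibration $f:X\to Y$ onto a normal compact K\"ahler space contracting exactly the $[\alpha]$-trivial curves, with $[\alpha]=f^*[\omega_Y]$. Granting this, $[\omega_Y]$ is strictly positive on $\ov{NA}(Y)\setminus\{0\}$, hence K\"ahler --- automatically for $\dim Y=1$, and for $\dim Y=2$ because $Y$ is then projective (its Kodaira dimension being $-\infty$) and the Nakai--Moishezon criterion applies; $\dim Y<3$ because $\int_X\alpha^3=0$; and $-K_X$ is $f$-ample because $[\omega_0]=[\alpha]-\lambda c_1(K_X)$ is K\"ahler while $[\alpha]$ is $f$-trivial. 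Thus $f:X\to Y$ is a Fano fibration with $[\alpha]=f^*[\omega_Y]$, and transcribing this through the flow yields \eqref{fact}.

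The principal obstacle is twofold. First, producing the contraction $f$ in the transcendental K\"ahler setting: when $X$ is projective and $[\alpha]\in NS_{\mathbb{R}}(X)$ it is supplied at once by the base-point-free theorem \cite{KMM}, but in general one must invoke the full H\"oring--Peternell minimal model program for K\"ahler threefolds --- the cone theorem, the existence of divisorial and flipping contractions onto K\"ahler spaces, and termination --- and verify that the property that $[\alpha]$ is $K_X$-negative on the relevant face persists along the program (using terminality to track how $[\omega_0]$ and $c_1$ transform), as well as that the rays of $F$ are contracted by a single holomorphic map on $X$ itself rather than merely on a birational model. Second, promoting $[\omega_Y]$ from being strictly positive on curves to being a genuine K\"ahler metric on the possibly singular space $Y$: this is immediate once $Y$ is projective, but in the non-algebraic cases it requires a Demailly--Paun-type argument excluding $[\omega_Y]$-trivial subvarieties, and it is here --- as with the finite time extinction statement of Conjecture \ref{main} --- that one is forced to bring in genuinely analytic input and that the low dimension becomes essential.
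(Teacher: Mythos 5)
Your reduction scheme (Theorem \ref{equiv}, deducing Conjecture \ref{main} from Conjecture \ref{main2} by a diameter lower bound, and handling $n\leq 2$ via $H^{2,0}(X)=0$) broadly matches the paper, although the paper does not prove convergence of $\omega(t)$ to a metric on $Y$ but only the Schwarz-lemma bound $\omega(t)\geq C^{-1}f^*\omega_Y$, which suffices. The genuine problem is the core of the $n=3$ case, where your argument is a programme whose central step is left open and whose supporting claims contain errors. The cone and contraction theorems of \cite{HP} produce contractions of single $K_X$-negative \emph{extremal rays}, and running the MMP replaces $X$ by a bimeromorphic model via divisorial contractions and flips; neither gives a holomorphic map defined on $X$ itself that contracts the entire face $F$ orthogonal to $[\alpha]$, which in general is not a ray. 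You flag exactly this (``the rays of $F$ are contracted by a single holomorphic map on $X$ itself rather than merely on a birational model'') but do not resolve it, and it is precisely the crux. The paper avoids it entirely: using Lemma \ref{psef} and \cite{Br} it gets uniruledness, reduces via the MRC fibration to the case of a two-dimensional base (the cases $\dim B\leq 1$ giving $H^{2,0}(X)=0$ and hence Proposition \ref{ns}), shows that $\frac{1}{\lambda}[\omega_0]$ is \emph{normalized} in the sense of \cite{HP2} by combining \cite[Lemma 3.3]{HP2} with the bigness criterion of \cite{Bo}, and only then invokes \cite[Theorem 1.4]{HP2}, which is specifically designed to produce the desired map on $X$ itself. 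Without this, or an equivalent substitute, your map $f$ is not constructed.

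Your argument that $[\omega_Y]$ is a K\"ahler class is also wrong where it is specific. You assert that when $\dim Y=2$ the base is projective ``its Kodaira dimension being $-\infty$''; but there is no reason for $\kappa(Y)=-\infty$: take $X=\mathbb{CP}^1\times T$ with $T$ a non-projective two-torus, a Fano fibration over a non-projective base of Kodaira dimension $0$. So Nakai--Moishezon is not available, and in any case strict positivity on curves does not characterize K\"ahler classes on non-projective, possibly singular, surfaces. The paper instead proves that $Y$ has rational singularities (relative Kawamata--Viehweg vanishing \cite{An} together with \cite{Ko}), hence is a K\"ahler space by \cite{Fu}, descends $[\alpha]$ through the diagram coming from \cite{HP2} using \cite[Lemma 3.3]{HP}, and produces an actual K\"ahler metric in the descended class by pushing forward a K\"ahler current from \cite[Theorem 1.1]{CT} and modifying its potentials near the finitely many exceptional points by a regularized maximum. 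These steps have no counterpart in your sketch, so the $n=3$ case remains unproved as written.
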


In fact, we will show in Section \ref{sectvol} that in general Conjectures \ref{main2} and \ref{bpf} are equivalent (this is essentially elementary), and that Conjecture \ref{main2} implies Conjecture \ref{main},
by modifying the arguments in \cite{So} to show that once we have a Fano fibration, if the base $Y$ is not a point then the diameter of $(X,\omega(t))$ does not go to zero.  We are then reduced to showing Conjecture \ref{bpf}.
The case when $n=2$ is not hard to deal with. For the case when $n=3$, our main technical tool is the very recent completion of the Minimal Model Program for K\"ahler threefolds by H\"oring-Peternell \cite{HP,HP2}, especially their construction of Mori fiber spaces on a bimeromorphic model of a uniruled K\"ahler threefold \cite{HP2}. Using their results, together with some extra arguments due to the fact that the Fano fibrations that we seek to construct are more general than Mori fiber spaces, in Section \ref{sectext} we construct the Fano fibration in the setting of Conjecture \ref{bpf}.\\

In light of these results, it is desirable to study the behavior of the K\"ahler-Ricci flow on the total space of a Fano fibration, with initial metric satisfying \eqref{fact}. This is in general a very hard problem. When $Y$ is a point this amounts to studying the K\"ahler-Ricci flow on Fano manifolds in the anticanonical class.
In general, it is expected (see \cite{SoT, Ti, Ti2}) that as $t\to T$ the evolving metrics $\omega(t)$ converge (in a suitable sense, away from the subvariety $f^{-1}(S)$ where $S\subset Y$ is the critical locus of $f$ together with the singular set of $Y$) to $f^*\omega_Y$ for some K\"ahler metric on $Y\backslash S$. Furthermore $(X,\omega(t))$ is expected to converge in the Gromov-Hausdorff topology to the metric completion of $(Y\backslash S,\omega_Y)$, which should be homeomorphic to $Y$. Lastly, for any given fiber $X_y=f^{-1}(y), y\in Y\backslash S$, the rescaled metrics
$\frac{\omega(t)}{T-t}\big|_{X_y}$ should converge in a suitable sense to a (possibly singular) K\"ahler-Ricci soliton. These results are essentially known when $Y$ is a point (see \cite{CW,PS,ST}), and some progress has been made in the case of projective bundles \cite{Fo, Fo2, SSW, SW1}, but not much more is known in general.\\

{\bf Acknowledgments.} We thank Andreas H\"oring for his crucial help with the proof of Theorem \ref{three}, Aaron Naber for useful discussions, and the referee for helpful comments.
The first-named author is supported in part by NSF grant DMS-1308988 and by a Sloan Research Fellowship, and the second-named author by grant NSFC-11271015.
This work was carried out while the first-named author was visiting the Yau Mathematical Sciences Center of Tsinghua University in Beijing, which he would like to thank for the hospitality.

\section{Finite time collapsing}\label{sectvol}
In this section we prove Conjectures \ref{main2} and \ref{bpf} when $n\leq 3$.

For the moment we work in general dimension $n$, and will restrict to $n\leq 3$ later on.
Recall the following standard definition (see e.g. \cite[p.140]{BS}):

\begin{definition}\label{deff}
Let $f:X\to Y$ be a holomorphic map between compact complex analytic spaces.
We say that a holomorphic line bundle $L$ on $X$ is $f$-ample if there exists $\ell\geq 1$ such that if we consider the coherent sheaf $\mathcal{F}=f_* (\ell L)$ on $Y$, then
the natural map $f^*f_* (\ell L)\to \ell L$ is surjective and defines an embedding
$\Phi:X\hookrightarrow \mathbb{P}(\mathcal{F}):=\mathrm{Proj}_Y (\mathrm{Sym}(\mathcal{F})),$ such that $f=\pi\circ\Phi$ where $\pi:\mathbb{P}(\mathcal{F})\to Y$ is the projection, and so that $\ell L\cong \Phi^*\mathcal{O}_{\mathbb{P}(\mathcal{F})}(1)$. 
\end{definition}

To start, we make the following useful observation, which is well-known in the projective case (see \cite[Proposition II.7.10]{Ha}). We refer the reader for example to \cite{Bi,Fu2, HP, Va} for the definition and basic properties of K\"ahler metrics on compact complex analytic spaces. Unless otherwise stated, the analytic spaces that we consider are reduced, irreducible, but not necessarily normal.

\begin{lemma}\label{hart}
Let $f:X\to Y$ be a surjective holomorphic map with connected fibers, where $X$ is a compact K\"ahler manifold and $Y$ is a compact normal K\"ahler space with $0\leq \dim Y<\dim X$. Then $-K_X$ is $f$-ample if and only if
there exists a K\"ahler metric $\omega_Y$ on $Y$ such that $f^*[\omega_Y]-c_1(K_X)$ is a K\"ahler class on $X$.
\end{lemma}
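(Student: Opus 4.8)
The plan is to prove each direction separately, with the $f$-ample $\Rightarrow$ K\"ahler class direction being the more substantive one. For that direction, suppose $-K_X$ is $f$-ample, so there is $\ell\geq 1$ and an embedding $\Phi\colon X\hookrightarrow\mathbb{P}(\mathcal{F})$ over $Y$ with $\ell(-K_X)\cong\Phi^*\mathcal{O}_{\mathbb{P}(\mathcal{F})}(1)$, where $\mathcal{F}=f_*(-\ell K_X)$. On the projectivized bundle $\mathbb{P}(\mathcal{F})$, the line bundle $\mathcal{O}_{\mathbb{P}(\mathcal{F})}(1)$ is $\pi$-ample over $Y$ by construction; the standard fact (the analytic analogue of \cite[Proposition II.7.10]{Ha}, which one obtains over each Stein open set of $Y$ and patches, using that $\pi$ is projective) is that one can twist by a sufficiently positive line bundle pulled back from $Y$ to make it genuinely ample. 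Concretely, choosing a K\"ahler metric $\omega_Y$ on $Y$ (which exists since $Y$ is a K\"ahler space) and $A$ large, the class $c_1(\mathcal{O}_{\mathbb{P}(\mathcal{F})}(1))+A\,\pi^*[\omega_Y]$ contains a K\"ahler metric on the analytic space $\mathbb{P}(\mathcal{F})$. Pulling back by the embedding $\Phi$ and dividing by $\ell$ (and absorbing constants into $\omega_Y$), we get that $-c_1(K_X)+f^*[\tfrac{A}{\ell}\omega_Y]$ is represented by the restriction of a K\"ahler metric, hence is a K\"ahler class on $X$.

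For the converse direction, suppose there is a K\"ahler metric $\omega_Y$ on $Y$ with $f^*[\omega_Y]-c_1(K_X)$ a K\"ahler class on $X$. The key point is that $-K_X$ restricted to any fiber $X_y=f^{-1}(y)$ is then ample: on $X_y$ the class $f^*[\omega_Y]$ restricts to zero, so $-c_1(K_X)|_{X_y}$ is the restriction of a K\"ahler class, hence K\"ahler on the (compact K\"ahler, in fact projective since $-K_{X_y}$ is ample) fiber. To upgrade fiberwise ampleness to relative ampleness one uses the relative version of the Nakai--Moishezon / Kleiman-type criterion, or more directly an argument via relative vanishing and generation of $f_*(-\ell K_X)$: for $\ell$ large the sheaf $f_*(-\ell K_X)$ is generated along each fiber and the induced relative map $\Phi$ is a closed embedding on each fiber, hence a closed embedding, giving exactly the defining property of $f$-ampleness in Definition \ref{deff}. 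Here one needs some care because $Y$ is only a normal analytic space and not projective, but the statements are local on $Y$ and $f$ is proper, so Grauert's coherence and base-change theorems together with the relative projectivity of $X\to Y$ (which itself needs to be established — $X\to Y$ with $-K_X$ fiberwise ample and a K\"ahler class that is relatively ample is projective over $Y$) suffice.

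The main obstacle I expect is the direction K\"ahler class $\Rightarrow$ $f$-ample, specifically the step of passing from a K\"ahler class on $X$ whose restriction to each fiber is ample, to an honest relative embedding over a possibly singular, possibly non-projective base $Y$. In the projective setting this is immediate from the relative base-point-free / Serre vanishing machinery, but in the analytic K\"ahler category one must either invoke Grauert's direct image theorems carefully (coherence of $f_*(-\ell K_X)$, and surjectivity of $f^*f_*(-\ell K_X)\to -\ell K_X$ near each fiber for $\ell\gg 0$, which follows from relative Castelnuovo--Mumford regularity applied fiberwise combined with semicontinuity/base-change) or reduce to the projective case by first showing $f\colon X\to Y$ is a projective morphism. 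The subtlety is ensuring the integer $\ell$ can be chosen uniformly over the (compact) base $Y$, which uses properness and noetherian-type arguments on coherent sheaves over analytic spaces. Everything else — existence of K\"ahler metrics on the projectivized bundle, restriction to subvarieties, scaling — is routine.
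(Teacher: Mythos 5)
Your proposal is correct and follows essentially the same route as the paper: for the forward direction, a metric on $-\ell K_X$ with fiberwise-positive curvature twisted by a large multiple of $f^*\omega_Y$ (you do this upstairs on $\mathbb{P}(\mathcal{F})$, the paper directly on $X$, which is a cosmetic difference), and for the converse, positivity of $-K_X$ restricted to each (possibly singular) fiber upgraded to ampleness via Grauert's Kodaira embedding theorem. The step you flag as the main obstacle---passing from ampleness on every fiber to $f$-ampleness over a singular, non-projective base---is exactly what the paper outsources to a citation (\cite[Proposition 1.4]{Bi2}), so your sketch of it via Grauert's direct image and base-change machinery is the right substance and not a gap.
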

\begin{proof}
Assume that $-K_X$ is $f$-ample. This clearly implies that the map $f$ is projective, and this in turns implies that $f$ is a K\"ahler morphism (in the sense of \cite{Fu2}). More precisely we can find a metric $h$ on $-\ell K_X$ whose curvature form $\omega$ is positive definite on all the fibers, see e.g. \cite[Lemma 4.19]{Bi}, \cite[Lemma 4.4]{Fu2} or \cite[Proposition II.1.3.1]{Va}, and then it follows that given any K\"ahler metric $\omega_Y$ on $Y$ there exists $A>0$ large enough so that
$Af^*\omega_Y+\omega$ is a K\"ahler metric on $X$ (see again \cite[Lemma 4.4]{Fu2} or \cite[Proposition II.1.3.1]{Va}), which is in the class
$f^*[A\omega_Y]-\ell c_1(K_X)$.

Conversely, if we have that $f^*[\omega_Y]-c_1(K_X)$ is a K\"ahler class on $X$, for some K\"ahler class $[\omega_Y]$ on $Y$, then $-K_X$ is $f$-ample.
Indeed, for every fiber $F$ of $f$ (which may be singular) we have that $-K_X|_F$ is a holomorphic line bundle with a smooth metric with strictly positive curvature form (in the sense of analytic spaces). Grauert's version of the Kodaira embedding theorem for analytic spaces \cite{Gr} (see also \cite[Theorem 1.1]{Bi2}) implies that $-K_X|_F$ is ample, and this is equivalent to $-K_X$ being $f$-ample, see e.g. \cite[Proposition 1.4]{Bi2}.
\end{proof}

We can now show the easy direction of Conjecture \ref{main2}, namely that on every Fano fibration there always exists a solution of the K\"ahler-Ricci flow \eqref{krf} which collapses in finite time. Given $f:X\to Y$ a Fano fibration, by Lemma \ref{hart} there exists a K\"ahler metric $\omega_Y$ on $Y$ such that $[\omega_0]=f^*[\omega_Y]- c_1(K_X)$ is a K\"ahler class on $X$. Then the K\"ahler-Ricci flow starting at any K\"ahler metric $\omega_0$ in this class has a finite time singularity at time $\frac{1}{2\pi}$ (thanks to the cohomological characterization of the maximal existence time of \eqref{krf} given in \cite{Ts,Ts2,TZ}) and the total volume of $X$ goes to zero as time approaches $\frac{1}{2\pi}$, because the limiting class
$[\omega_0]+c_1(K_X)=f^*[\omega_Y]$ satisfies $\int_X(f^*\omega_Y)^n=0$.

We can also show the easy direction of Conjecture \ref{bpf}. Let $f:X\to Y$ be a Fano fibration, and fix $\omega_Y$ a K\"ahler metric on $Y$ (in the sense of analytic spaces) as in Lemma \ref{hart}, so that $f^*[\omega_Y]- c_1(K_X)$ is a K\"ahler class on $X$. Then $\alpha=f^*\omega_Y$ is a smooth nonnegative real $(1,1)$ form on $X$, and so its cohomology class is nef (it is the limit of the K\"ahler classes $[\alpha+\ve\omega_X]$ as $\ve\downarrow 0$, where $\omega_X$ is any K\"ahler metric on $X$), and satisfies $\int_X\alpha^n=0$ and $[\alpha]+c_1(X)$ is a K\"ahler class, as required.

Next we show:
\begin{theorem}\label{equiv}
Conjectures \ref{main2} and \ref{bpf} are equivalent.
\end{theorem}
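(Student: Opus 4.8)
The plan is to prove the equivalence of Conjectures \ref{main2} and \ref{bpf} by showing that the K\"ahler-Ricci flow data and the analytic data translate back and forth using only the cohomological characterization of the maximal existence time of \eqref{krf}. Recall (from \cite{Ts,Ts2,TZ}) that if $\omega(t)$ solves \eqref{krf} with $\omega(0)=\omega_0$ then $[\omega(t)] = [\omega_0] - 2\pi t\, c_1(K_X)$ for $t\in[0,T)$, and $T = \sup\{t>0 : [\omega_0] - 2\pi t\, c_1(K_X) \text{ is K\"ahler}\}$; moreover $\vol(X,\omega(t)) = \frac{1}{n!}\int_X [\omega(t)]^n$ depends only on the cohomology class. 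So finite time collapsing is equivalent to the purely cohomological statement that $T<\infty$ and $\int_X([\omega_0]-2\pi T c_1(K_X))^n = 0$.

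First I would prove that Conjecture \ref{bpf} implies Conjecture \ref{main2}. Given a K\"ahler metric $\omega_0$ on $X$ such that \eqref{krf} has finite time collapsing, set $[\alpha] := [\omega_0] - 2\pi T c_1(K_X) = \lim_{t\to T}[\omega(t)]$, which is nef (as noted in the excerpt, limiting classes of the flow are nef) and satisfies $\int_X\alpha^n = 0$ by the collapsing hypothesis. Moreover $[\alpha] + \lambda c_1(X)$ is K\"ahler for $\lambda = 2\pi t_0$ whenever $0<t_0<T$, since then $[\alpha]+2\pi t_0 c_1(X) = [\omega_0] - 2\pi(T-t_0)c_1(K_X) = [\omega(T-t_0)]$ is a K\"ahler class. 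Hence the hypotheses of Conjecture \ref{bpf} are met, so (assuming Conjecture \ref{bpf}) there is a Fano fibration $f:X\to Y$ with $[\alpha] = f^*[\omega_Y]$, and then $[\omega_0] = [\alpha] + 2\pi T c_1(X) = -2\pi T c_1(K_X) + f^*[\omega_Y]$, which is exactly \eqref{fact}; the existence of a Fano fibration is the desired conclusion. Conversely, to get Conjecture \ref{main2} $\Rightarrow$ Conjecture \ref{bpf}, start from the analytic hypotheses: $[\alpha]$ nef, $\int_X\alpha^n=0$, and $[\alpha]+\lambda c_1(X)$ K\"ahler for some $\lambda>0$. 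Set $\omega_0$ to be any K\"ahler metric in the class $[\alpha]+\lambda c_1(X)$, and run \eqref{krf}; then $[\omega(t)] = [\alpha] + (\lambda - 2\pi t)c_1(X) = [\alpha] - 2\pi(t - \tfrac{\lambda}{2\pi})c_1(K_X)$, and since $[\alpha]$ is nef but $\int_X\alpha^n=0$ (so $[\alpha]$ is on the boundary of the K\"ahler cone and not K\"ahler), the maximal existence time is exactly $T = \tfrac{\lambda}{2\pi}<\infty$, with $\lim_{t\to T}[\omega(t)] = [\alpha]$ and $\vol(X,\omega(t)) \to \tfrac{1}{n!}\int_X\alpha^n = 0$. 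Thus the flow has finite time collapsing, so (assuming Conjecture \ref{main2}) there is a Fano fibration $f:X\to Y$ with $[\omega_0] = -2\pi T c_1(K_X) + f^*[\omega_Y]$, i.e. $[\alpha]+\lambda c_1(X) = \lambda c_1(X) + f^*[\omega_Y]$, which forces $[\alpha] = f^*[\omega_Y]$, proving \eqref{fact2}.

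The one subtle point — which I expect to be the main technical obstacle — is ensuring that the nefness of $[\alpha]$ together with $\int_X\alpha^n=0$ really does force the maximal existence time $T$ to equal $\tfrac{\lambda}{2\pi}$ rather than something strictly larger. For $t < \tfrac{\lambda}{2\pi}$ the class $[\omega(t)]$ is K\"ahler as an interpolation between the K\"ahler class $[\omega_0]$ and the nef class $[\alpha]$ (using that the K\"ahler cone is convex and open, and $[\alpha]$ nef means K\"ahler$\,+\,$nef is K\"ahler); at $t=\tfrac{\lambda}{2\pi}$ the class is $[\alpha]$ itself, which has zero top self-intersection and hence cannot be K\"ahler, so the flow cannot be continued past that time — this is precisely where one invokes that a K\"ahler class has strictly positive top self-intersection. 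One should also remark that $\lambda$ is uniquely determined: if $[\alpha]+\lambda'c_1(X)$ is K\"ahler for $\lambda'<0$ or for two different positive values, convexity of the K\"ahler cone and the fact that $[\alpha]$ lies on its boundary pin down $\lambda$, which is why the $T$ in \eqref{fact} and the $\lambda$ in \eqref{fact2} must agree. The remaining content is bookkeeping: matching up the factors of $2\pi$ in the two statements, and checking that ``Fano fibration'' appears on the correct side of each implication, which is immediate from the definitions.
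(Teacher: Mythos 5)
Your proposal is correct and follows essentially the same route as the paper: both directions reduce to the cohomological characterization of the maximal existence time, with the key points being that $[\omega(t)]$ interpolates between the K\"ahler class $[\omega_0]$ and the nef class $[\alpha]$ (hence stays K\"ahler for $t<\lambda/2\pi$, since K\"ahler plus nef is K\"ahler) and that $\int_X\alpha^n=0$ rules out $[\alpha]$ being K\"ahler, so $T=\lambda/2\pi$ exactly. Two minor quibbles: your evolution formula should read $[\omega(t)]=[\omega_0]-2\pi t\,c_1(X)=[\omega_0]+2\pi t\,c_1(K_X)$ (you consistently write $c_1(K_X)$ where $c_1(X)$ is meant, which does not affect the substance), and since both conjectures are \emph{if and only if} statements, a complete proof of their equivalence should also record that the easy directions (Fano fibration implies the flow/class data) hold unconditionally, as the paper establishes just before the theorem, so that only the hard directions need to be exchanged.
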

\begin{proof}
Since we have just shown that the easy directions of both conjectures always hold, it is enough to show that the other directions are equivalent. Assume first that Conjecture \ref{main2} holds, and let $[\alpha]\in H^{1,1}(X,\mathbb{R})$ be a nef class with $\int_X\alpha^n=0$ and with $[\alpha]+\lambda c_1(X)$ K\"ahler for some $\lambda>0$. Fix a K\"ahler metric $\omega_0$ in this class, and consider its evolution by the K\"ahler-Ricci flow \eqref{krf}. The class of the evolved metric $\omega(t)$ is
$$[\omega(t)]=[\omega_0]-2\pi tc_1(X)=[\alpha]+(\lambda-2\pi t)c_1(X)=\left(1-\frac{2\pi t}{\lambda}\right)[\omega_0]+\frac{2\pi t}{\lambda}[\alpha].$$
For $0\leq t<\frac{\lambda}{2\pi}$ this is a sum of a K\"ahler class and a nef class, and so it is K\"ahler, while for $t=\frac{\lambda}{2\pi}$ this equals $[\alpha]$ which is nef but not
K\"ahler since $\int_X\alpha^n=0$. Then the cohomological characterization of the maximal existence time $T$ of \eqref{krf} given in \cite{Ts,Ts2,TZ}
shows that $T=\frac{\lambda}{2\pi}<\infty$ and the limiting class is $[\alpha]$. Therefore the K\"ahler-Ricci flow $\omega(t)$ has finite time collapsing, and by Conjecture \ref{main2}, $X$ admits a Fano fibration. Also by \eqref{fact} we can write $[\omega_0]=-\lambda c_1(K_X)+f^*[\omega_Y],$ for some K\"ahler metric $\omega_Y$ on $Y$, and so $[\alpha]=f^*[\omega_Y]$, i.e. \eqref{fact2} holds.

Assume conversely that Conjecture \ref{bpf} holds, and let $\omega_0$ be a K\"ahler metric on $X$ such that the K\"ahler-Ricci flow \eqref{krf} has finite time collapsing at time $T<\infty$.
The limiting class of the flow is
$$[\alpha]=[\omega_0]-2\pi Tc_1(X),$$
which is nef, satisfies $\int_X\alpha^n=0$, and
$[\alpha]+2\pi Tc_1(X)$ is K\"ahler. Therefore Conjecture \ref{bpf} gives us a Fano fibration $f:X\to Y$, and $[\alpha]=f^*[\omega_Y]$ for some K\"ahler metric $\omega_Y$ on $Y$, which shows \eqref{fact}.
\end{proof}

So we are left to we consider the converse implication in Conjecture \ref{bpf}, and so we assume we have a nef class $[\alpha]$ with $\int_X\alpha^n=0$ and with $[\alpha]+\lambda c_1(X)$ K\"ahler for some $\lambda>0$. Recall that a $(1,1)$ class is called pseudoeffective if it contains a closed positive current, and that a holomorphic line bundle is called pseudoeffective if its first Chern class is (see e.g. \cite{De}).
We have the following simple remark (cf. \cite[Proposition 4.2]{CT}).
\begin{lemma}\label{psef}
If $X^n$ is a compact K\"ahler manifold which has a nef class $[\alpha]$ with $\int_X\alpha^n=0$ and with $[\alpha]+\lambda c_1(X)$ K\"ahler for some $\lambda>0$.
Then $K_X$ is not pseudoeffective, and therefore $\kappa(X)=-\infty$.
\end{lemma}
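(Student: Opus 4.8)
The plan is to argue by contradiction: assume $K_X$ is pseudoeffective. Since $c_1(K_X)=-c_1(X)$, and since by hypothesis $[\omega]:=[\alpha]+\lambda c_1(X)$ is a K\"ahler class, we have $\lambda\,c_1(K_X)=[\alpha]-[\omega]$; thus the assumption means that the $(1,1)$-class $[\alpha]-[\omega]$ is pseudoeffective. Equivalently --- and this is the cleanest way to see the contradiction --- the class $[\alpha]=[\omega]+\lambda c_1(K_X)$ would then be the sum of a K\"ahler class and a pseudoeffective class, and so it would contain a K\"ahler current (add a closed positive current in $\lambda c_1(K_X)$ to a K\"ahler form in $[\omega]$), i.e. $[\alpha]$ would be big. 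But a nef class which is big has strictly positive top self-intersection (Demailly--Paun; see \cite{De}), contradicting $\int_X\alpha^n=0$.

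Alternatively, I would run the same argument purely at the level of intersection numbers, avoiding the characterization of big nef classes. Fix a K\"ahler form $\omega$ in $[\omega]$ and set $a_k:=\int_X\alpha^{n-k}\wedge\omega^k$ for $0\le k\le n$. Since $[\alpha]$ is nef (a limit of K\"ahler classes) and $[\omega]$ is K\"ahler, each $a_k\ge 0$, while $a_0=\int_X\alpha^n=0$ by hypothesis and $a_n=\int_X\omega^n>0$. Using the standard fact that the intersection of one pseudoeffective $(1,1)$-class with $n-1$ nef $(1,1)$-classes is nonnegative (pair a closed positive current in the first class with K\"ahler approximants of the others and let the approximation parameter tend to zero), apply it to $[\alpha]-[\omega]$ paired against $n-1-j$ copies of $[\alpha]$ and $j$ copies of $[\omega]$, for each $0\le j\le n-1$, to get
\[
0\le\int_X\big([\alpha]-[\omega]\big)\wedge\alpha^{n-1-j}\wedge\omega^{j}=a_j-a_{j+1}.
\]
Hence $0=a_0\ge a_1\ge\cdots\ge a_n>0$, which is absurd. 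Therefore $K_X$ is not pseudoeffective.

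To conclude $\kappa(X)=-\infty$: if instead $\kappa(X)\ge 0$, then $h^0(X,mK_X)\ge 1$ for some $m\ge 1$, so $|mK_X|$ contains an effective divisor $D$, and the current of integration over $D$ is a closed positive $(1,1)$-current in the class $c_1(mK_X)=m\,c_1(K_X)$; this would make $c_1(K_X)$, hence $K_X$, pseudoeffective, a contradiction. The only non-formal ingredient is the positivity of mixed intersection numbers between pseudoeffective and nef classes in the (possibly non-projective) K\"ahler setting --- equivalently, the Demailly--Paun fact on big nef classes --- which is where a small amount of pluripotential theory enters; I expect everything else to be routine, and I would present the elementary intersection-number version as the main line of proof since it is essentially self-contained.
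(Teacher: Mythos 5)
Your first argument is exactly the paper's proof: assume $K_X$ pseudoeffective, observe that $[\alpha]=[\omega]+\lambda c_1(K_X)$ is then the sum of a K\"ahler class and a pseudoeffective class, hence big, and derive a contradiction with $\int_X\alpha^n=0$ from the fact that a nef and big class has positive top self-intersection; the concluding step on $\kappa(X)$ is also identical. One small attribution point: the implication you need (nef and big $\Rightarrow$ $\int_X\alpha^n>0$) is the one the paper takes from Boucksom's volume theory \cite{Bo}; the Demailly--P\u{a}un result is really the converse direction (nef with positive self-intersection $\Rightarrow$ big), so your citation is slightly off even though the fact is correct. Your second, intersection-theoretic argument is a genuinely different and more elementary route: the telescoping inequalities $a_j\geq a_{j+1}$, obtained by pairing the pseudoeffective class $[\alpha]-[\omega]=\lambda c_1(K_X)$ against mixed powers of the nef class $[\alpha]$ and the K\"ahler class $[\omega]$, yield $0=a_0\geq a_n=\int_X\omega^n>0$ directly. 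This only uses the nonnegativity of the pairing of a closed positive $(1,1)$-current against closed smooth positive forms (plus cohomological invariance), so it bypasses both the notion of bigness and the volume characterization, at the cost of being slightly longer; both arguments are correct.
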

\begin{proof}
If $K_X$ is pseudoeffective, then so is the class $-\lambda c_1(X)$. The class $[\alpha]$ is therefore the sum of a K\"ahler class and a pseudoeffective class, and so it is big (in the sense that it contains a K\"ahler current \cite{De}). But a nef and big class always has $\int_X\alpha^n>0$, by \cite[Theorems 4.1 and 4.7]{Bo}, and this contradicts our assumption.
Also, in general $K_X$ not pseudoeffective implies $\kappa(X)=-\infty$, since if $\kappa(X)\geq 0$ then some power $K_X^{\otimes\ell}$ ($\ell\geq 1$) is effective, and so $K_X$ is pseudoeffective.
\end{proof}

A well-known conjecture says that if $K_X$ is not pseudoeffective, then $X$ is uniruled (i.e. covered by rational curves). This is proved in \cite{BDPP} in the projective case, and is also known in the K\"ahler case if $n\le 3$ by \cite{Br}.

If we assume now that $X$ is projective and $[\alpha]\in H^2(X,\mathbb{Q})$, so there is an integer $m\geq 1$ such that $[m\alpha]=c_1(L)$ for some holomorphic line bundle $L$, then it is well-known that Conjecture \ref{bpf} holds. Indeed, by assumption we have that $[\alpha]+\lambda c_1(X)$ is a K\"ahler class, and since this is an open condition we may assume that $\lambda=\frac{p}{q}>0$ is positive and rational. Up to increasing $m$ we may assume that $m\lambda\geq 1$ is an integer, and we fix $\omega_0$ a K\"ahler metric in the class $[m\alpha]+m\lambda c_1(X)$. Then we have
$$c_1(L-K_X)=[\omega_0]+(m\lambda-1)c_1(K_X)=\frac{1}{m\lambda}[\omega_0]+\left(1-\frac{1}{m\lambda}\right)c_1(L),$$
is a K\"ahler class (since it is sum of a K\"ahler and a nef class), i.e. $L-K_X$ is ample. The base-point-free theorem (see \cite{KMM})
then shows that $kL$ is base-point-free for some $k\geq 1$, and so it induces a holomorphic map $f:X\to Y$ onto a normal projective variety $Y$, so that $f$ has connected fibers and $kL$ is linearly equivalent to the pullback of an ample divisor under $f$. This implies that $[\alpha]=f^*[\omega_Y]$ for some K\"ahler metric $\omega_Y$ on $Y$, and since we have that $\int_X\alpha^n=0$, this implies that $0\leq \dim Y<\dim X$.
By construction, a multiple of $-K_X$ is linearly equivalent to the sum of an ample line bundle and the pullback of a line bundle from $Y$, and so $f$ is a Fano fibration and \eqref{fact2} holds.

An extension of this argument deals with the more general case when $[\alpha]$ belongs to the real N\'eron-Severi group
$$NS_\mathbb{R}(X)=(H^{1,1}(X,\mathbb{R})\cap H^2(X,\mathbb{Q}))\otimes\mathbb{R}.$$
This is identified with the space of $\mathbb{R}$-divisors modulo numerical equivalence, and is a subspace of $H^{1,1}(X,\mathbb{R})$, in general of smaller dimension.

\begin{proposition}\label{ns}
Conjecture \ref{bpf} holds if $X$ is projective and $[\alpha]\in NS_\mathbb{R}(X)$. In particular, Conjecture \ref{bpf} holds if
$$H^{2,0}(X)=0.$$
\end{proposition}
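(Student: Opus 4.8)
The plan is to carry out an extension of the base-point-free argument given above for $[\alpha]\in H^2(X,\mathbb{Q})$, replacing the appeal to the Base-Point-Free Theorem (which needs an integral class) by a direct appeal to the Cone and Contraction Theorems of Mori theory, which apply verbatim to real extremal faces of the Mori cone. Since $X$ is projective, the K\"ahler class $[\alpha]+\lambda c_1(X)$ is ample, i.e.\ $[\alpha]-\lambda K_X$ is ample, while $[\alpha]$ is nef. Because $[\alpha]\in NS_{\mathbb{R}}(X)$ we may regard it as a real divisor class and form the face $F:=[\alpha]^{\perp}\cap\overline{NE}(X)$ of the Mori cone cut out by the nef class $[\alpha]$. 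For every nonzero $\gamma\in F$ we have $[\alpha]\cdot\gamma=0$ and $([\alpha]-\lambda K_X)\cdot\gamma>0$, hence $K_X\cdot\gamma<0$, so $F$ is a $K_X$-negative extremal face; and $F\neq\{0\}$, since otherwise Kleiman's criterion would force $[\alpha]$ to be ample, contradicting $\int_X\alpha^n=0$.

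Next I would invoke the Contraction Theorem for the $K_X$-negative extremal face $F$ on the smooth projective variety $X$ (see \cite{KMM}): this produces a surjective projective morphism with connected fibers $f:X\to Y$ onto a normal projective variety $Y$, contracting precisely the curves whose class lies in $F$, such that $f^{*}$ identifies $N^{1}(Y)_{\mathbb{R}}$ with $F^{\perp}\subset N^{1}(X)_{\mathbb{R}}$. Since $[\alpha]\in F^{\perp}$ by construction, we may write $[\alpha]=f^{*}\beta$ for a unique $\beta\in N^{1}(Y)_{\mathbb{R}}$. The class $\beta$ is nef (as $[\alpha]$ is and $f$ is surjective) and in fact ample: for any curve $C_Y\subset Y$, lifting to a curve $C_X\subset X$ with $f(C_X)=C_Y$ gives $[C_X]\notin F$, hence $[\alpha]\cdot C_X>0$ and so $\beta\cdot C_Y>0$, and one concludes ampleness by the (relative) Kleiman criterion on $Y$; equivalently, this is part of the standard package: the nef divisor $[\alpha]$, vanishing on $\overline{NE}(X)$ exactly along $F$, descends to an ample class on $Y$. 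Moreover the relative Mori cone $\overline{NE}(X/Y)$ equals $F$, on which $-K_X$ is strictly positive, so $-K_X$ is $f$-ample.

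It remains to check the dimension condition and recognize $f$ as a Fano fibration. If $f$ were birational we would get $\int_X\alpha^n=\int_X(f^{*}\beta)^n=\int_Y\beta^n>0$, since $\beta$ is ample on the $n$-dimensional variety $Y$, contradicting $\int_X\alpha^n=0$; hence $0\leq\dim Y<\dim X$. Together with $Y$ normal projective (hence a compact normal K\"ahler space), $f$ surjective with connected fibers, and $-K_X$ being $f$-ample, this exhibits $f$ as a Fano fibration, and $[\alpha]=f^{*}[\omega_Y]$ for any K\"ahler (Hodge) metric $\omega_Y$ on $Y$ representing $\beta$, which is \eqref{fact2}. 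For the final assertion, if $H^{2,0}(X)=0$ then the Hodge decomposition gives $H^{2}(X,\mathbb{R})=H^{1,1}(X,\mathbb{R})$, so the K\"ahler cone, being open, contains a rational class, and $X$ is projective by Kodaira's embedding theorem; moreover $NS_{\mathbb{R}}(X)=H^{1,1}(X,\mathbb{R})$, so every nef class, in particular our $[\alpha]$, lies in $NS_{\mathbb{R}}(X)$, and the first part applies.

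The point where this goes genuinely beyond the rational case is that the Base-Point-Free Theorem cannot be applied directly to the real class $[\alpha]$, and one cannot perturb $[\alpha]$ to a rational nef class while preserving $\int_X\alpha^n=0$ (which is not an open condition); contracting the extremal face $F$ directly is what bypasses this. Once the Contraction Theorem for $K_X$-negative extremal faces is invoked, the descent of $[\alpha]$ to an ample class on $Y$, the $f$-ampleness of $-K_X$, and the identification of $f$ with a Fano fibration are all formal consequences of the standard properties of extremal contractions, so I do not expect any essential difficulty beyond citing these correctly.
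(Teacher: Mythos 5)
Your argument is correct in outline but follows a genuinely different route from the paper's. The paper simply observes that $D:=\alpha$ is a nef $\mathbb{R}$-divisor with $A:=D-\lambda K_X$ ample and invokes the base-point-free theorem for $\mathbb{R}$-divisors \cite[Theorem 7.1]{HM}, which directly yields the contraction $f:X\to Y$ with $D\sim_{\mathbb{R}}f^*H$ for an ample $\mathbb{R}$-divisor $H$; the dimension count and the $f$-ampleness of $-K_X$ (there written as $-K_X\sim_{\mathbb{R}}\frac{1}{\lambda}(A-f^*H)$, a sum of an ample and an $f$-trivial divisor) then come out exactly as in your last steps. You instead contract the $K_X$-negative extremal face $F=[\alpha]^{\perp}\cap\overline{NE}(X)$ via the Cone and Contraction Theorems of \cite{KMM}; this is essentially how the real base-point-free theorem is proved, so the two routes are close cousins, but yours must re-derive the one piece of that theorem which is not formal, namely that the descended class $\beta$ is ample. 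Your stated justification --- $\beta\cdot C_Y>0$ for every curve $C_Y$, ``hence ample by Kleiman'' --- is not valid as written: Kleiman's criterion requires strict positivity on $\overline{NE}(Y)\setminus\{0\}$, and a nef class that is positive on every irreducible curve need not be ample (Mumford's example). The correct justification is that $[\alpha]$ lies in the relative interior of $\mathrm{Nef}(X)\cap F^{\perp}=f^{*}\mathrm{Nef}(Y)$; this does hold, but it uses the finiteness/discreteness part of the Cone Theorem (only finitely many extremal rays meet the region where $[\alpha]=A+\lambda K_X$ is small, because $A$ is bounded below on the unit sphere of $\overline{NE}(X)$), not merely positivity on curves. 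Your fallback phrase ``this is part of the standard package'' is accurate, but one should either supply that argument or cite the $\mathbb{R}$-divisor base-point-free/contraction statement directly --- at which point one has reproduced the paper's one-line proof. Everything else in your proposal (the identification $N^{1}(Y)_{\mathbb{R}}\cong F^{\perp}$, the relative Kleiman criterion giving $f$-ampleness of $-K_X$ on $\overline{NE}(X/Y)=F$, the exclusion of the birational case via $\int_X\alpha^{n}=0$, and the reduction of the $H^{2,0}(X)=0$ case to projectivity plus $NS_{\mathbb{R}}(X)=H^{1,1}(X,\mathbb{R})$) either matches the paper or is a correct formal consequence of the contraction package.
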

\begin{proof}
By assumption we have $\alpha=c_1(D)$ where $D$ is a nef $\mathbb{R}$-divisor, and $c_1(D-\lambda K_X)$ is a K\"ahler class for some positive real number $\lambda$, i.e. $A:=D-\lambda K_X$ is an ample $\mathbb{R}$-divisor. Then the base-point free theorem for $\mathbb{R}$-divisors \cite[Theorem 7.1]{HM} shows that $D$ is semiample, in the sense that there exists a surjective morphism $f:X\to Y$ to a normal projective variety $Y$, with connected fibers, and such that $D$ is $\mathbb{R}$-linearly equivalent to $f^*H$ where $H$ is an ample $\mathbb{R}$-divisor on $Y$. Therefore
$[\alpha]$ is the pullback of a K\"ahler class on $Y$, and as before this implies that $\dim Y<\dim X$. We also have that $-K_X$ is $\mathbb{R}$-linearly equivalent to $\frac{1}{\lambda}(A-f^*H)$, and so $-K_X$ is $f$-ample.

For the last item, it is enough to remark that $H^{2,0}(X)=0$ implies that $X$ is projective (an old result of Kodaira), and that $H^{1,1}(X,\mathbb{R})=NS_\mathbb{R}(X)$ (thanks to the Hodge decomposition on $H^2(X,\mathbb{C})$).
\end{proof}

\begin{corollary}\label{two}
Conjecture \ref{bpf} holds when $n=2$.
\end{corollary}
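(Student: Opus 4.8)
The plan is to reduce the statement instantly to Proposition \ref{ns} by computing the Kodaira dimension. By hypothesis $X$ is a compact K\"ahler surface carrying a nef class $[\alpha]$ with $\int_X\alpha^2=0$ and with $[\alpha]+\lambda c_1(X)$ K\"ahler for some $\lambda>0$, so Lemma \ref{psef} gives $\kappa(X)=-\infty$. On a compact complex surface the canonical bundle $K_X$ coincides with $\Omega^2_X$, hence $H^{2,0}(X)=H^0(X,K_X)=P_1(X)$, the first plurigenus; but $\kappa(X)=-\infty$ forces all plurigenera to vanish, in particular $P_1(X)=0$. Therefore $H^{2,0}(X)=0$, and Proposition \ref{ns} applies verbatim: it produces a surjective holomorphic map with connected fibers $f:X\to Y$ onto a normal projective variety $Y$ with $0\leq\dim Y<2$, such that $-K_X$ is $f$-ample and $[\alpha]=f^*[\omega_Y]$ for a K\"ahler metric $\omega_Y$ on $Y$. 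This is precisely the nontrivial direction of Conjecture \ref{bpf} in dimension $2$, and the easy direction was already established before Theorem \ref{equiv}.

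Since everything has been pushed into an already-proved proposition, there is essentially no obstacle here; the only thing to verify is the elementary identification $H^{2,0}(X)=H^0(X,K_X)$ on a surface and the standard fact that $\kappa(X)=-\infty$ kills $P_1(X)$. For completeness one could also bypass Proposition \ref{ns}: $\kappa(X)=-\infty$ together with the Enriques--Kodaira classification shows $X$ is rational or ruled, hence projective with $H^{1,1}(X,\mathbb{R})=NS_{\mathbb{R}}(X)$, after which the base-point-free argument from the projective case of Conjecture \ref{bpf} applies directly; but invoking the vanishing $H^{2,0}(X)=0$ is the shortest route, and it is the one I would take.
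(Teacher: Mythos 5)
Your proof is correct and is essentially identical to the paper's: both deduce $\kappa(X)=-\infty$ from Lemma \ref{psef}, conclude $H^{2,0}(X)=H^0(X,K_X)=0$ since the first plurigenus vanishes, and then invoke Proposition \ref{ns}. The alternative via the Enriques--Kodaira classification that you sketch at the end is also fine but unnecessary, as you note.
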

\begin{proof}
Indeed we have $\kappa(X)=-\infty$ by Lemma \ref{psef}, and when $n=2$ this implies that $H^0(X,K_X)=H^{2,0}(X)=0$, and we conclude by Proposition \ref{ns}.
\end{proof}

Finally we deal with the case when $n=3$.

\begin{theorem}\label{three}
Conjecture \ref{bpf} holds when $n=3$.
\end{theorem}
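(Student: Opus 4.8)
The plan is to prove the converse direction of Conjecture~\ref{bpf} for $n=3$ --- which is a transcendental form of the base-point-free theorem --- using the Minimal Model Program for compact K\"ahler threefolds of H\"oring--Peternell \cite{HP,HP2}. By Lemma~\ref{psef}, $K_X$ is not pseudoeffective (hence not nef) and $\kappa(X)=-\infty$, so by \cite{Br} the threefold $X$ is uniruled. If $[\alpha]=0$ then $\lambda c_1(X)$ is K\"ahler, $X$ is Fano, and $f:X\to Y=\{\mathrm{pt}\}$ is the desired Fano fibration; so assume $[\alpha]\neq0$. The identity $[\alpha]+\ve c_1(X)=\tfrac{\ve}{\lambda}\big([\alpha]+\lambda c_1(X)\big)+\big(1-\tfrac{\ve}{\lambda}\big)[\alpha]$ shows that $[\alpha]+\ve c_1(X)$ is K\"ahler for all $\ve\in(0,\lambda]$, so $[\alpha]$ is a boundary point of the nef cone of $X$ whose supporting face $\mathcal F:=[\alpha]^{\perp}\cap\overline{NA}(X)$ is $K_X$-\emph{negative}.

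Next I would run an $[\alpha]$-trivial $K_X$-MMP. By the cone theorem for K\"ahler threefolds \cite{HP}, $\mathcal F$ is locally rational polyhedral and each of its extremal rays is generated by the class of a rational curve $C$ with $-K_X\cdot C>0$ and $[\alpha]\cdot C=0$; by the contraction theorem such a ray $R$ admits a contraction $\phi_R:X\to X_1$ onto a normal compact K\"ahler space. If $\phi_R$ is of fiber type we stop. If it is divisorial or flipping, we replace $X$ by $X_1$ (resp.\ by the flip), which is again $\mathbb{Q}$-factorial, terminal and K\"ahler, and $[\alpha]$ by its push-forward (resp.\ strict transform) $[\alpha_1]$; this is again a nonzero nef class, and one checks that $[\alpha_1]+\lambda_1 c_1(X_1)$ is K\"ahler for a suitable $\lambda_1>0$ by pulling back and using the discrepancy relations together with the K\"ahlerness of contractions of K\"ahler threefolds. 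Since divisorial contractions lower the Picard number and threefold flips terminate, after finitely many steps we reach a fiber-type contraction $\pi:X'\to Y'$ with $\dim Y'<3$; we can never instead reach a model on which $[\alpha]$ has become K\"ahler, since then $[\alpha]$ would be big, contradicting $\int_X\alpha^{3}=0$. On $X'$ we obtain $[\alpha']=\pi^{*}\gamma'$ for a nef class $\gamma'$ on the normal K\"ahler space $Y'$, and $-K_{X'}$ is $\pi$-ample.

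It remains to produce the Fano fibration on $X$ itself. If $\gamma'$ is K\"ahler, put $Y:=Y'$; otherwise $Y'$ is a surface and $\gamma'$ is nef but not K\"ahler, and contracting its trivial curves (classical surface theory) yields $p:Y'\to Y$ onto a normal K\"ahler space with $\gamma'=p^{*}\omega_Y$ for a K\"ahler class $\omega_Y$. Then $f':=p\circ\pi:X'\to Y$ has $[\alpha']=(f')^{*}[\omega_Y]$ and $0\le\dim Y<3$. Since every MMP step was $[\alpha]$-trivial, the exceptional divisors of the divisorial contractions and the flipping curves are all contracted by $f'$, so the meromorphic map $g:=f'\circ(X\dashrightarrow X'):X\dashrightarrow Y$ extends to a holomorphic map $f:X\to Y$ with connected fibers and $[\alpha]=f^{*}[\omega_Y]$. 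Finally $f^{*}[\lambda^{-1}\omega_Y]-c_1(K_X)=\lambda^{-1}\big([\alpha]+\lambda c_1(X)\big)$ is K\"ahler, so by Lemma~\ref{hart} $-K_X$ is $f$-ample; hence $f$ is a Fano fibration and \eqref{fact2} holds.

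I expect the main obstacle to be two-fold. First, the entire argument must be carried out in the \emph{transcendental} regime: in the projective case with $[\alpha]\in NS_{\mathbb{R}}(X)$ everything is subsumed by the base-point-free theorem (Proposition~\ref{ns}), whereas here one has to rely on H\"oring--Peternell's cone, contraction and termination theorems for K\"ahler threefolds and verify step by step that the hypotheses on the evolving class are preserved. Second, and more delicate, is the descent from a Mori fiber space on the bimeromorphic model $X'$ to an honest Fano fibration on $X$: this is exactly where the greater flexibility of Fano fibrations over Mori fiber spaces is used (the extra contraction $p:Y'\to Y$, and relative Picard number $>1$ in general), and where one must justify that $g$ extends to a morphism and that $Y$ is a normal K\"ahler space.
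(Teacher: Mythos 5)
Your overall strategy differs from the paper's, and it contains a genuine gap at its very first technical step. You propose to run an $[\alpha]$-trivial $K_X$-MMP, invoking ``the cone theorem for K\"ahler threefolds \cite{HP}'' to assert that the face $\mathcal{F}=[\alpha]^{\perp}\cap\overline{NA}(X)$ is locally rational polyhedral with extremal rays generated by rational curves, each admitting a contraction onto a normal K\"ahler space. But the cone, contraction and termination package of \cite{HP} is established there under the hypothesis that $K_X$ is \emph{pseudoeffective}, which is exactly what fails in our situation (Lemma \ref{psef} shows $K_X$ is not pseudoeffective, and this is the whole point). For uniruled K\"ahler threefolds no such general cone theorem was available; this is precisely the difficulty that \cite{HP2} is designed to circumvent, and it does so not by running an MMP on $X$ but by a completely different device: the MRC fibration and the notion of a \emph{normalized} K\"ahler class ($\int_F\omega=2$ on the generic rational fiber $F$). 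Your subsequent steps (existence of flips for these transcendental rays, K\"ahlerness of the intermediate models, preservation of the K\"ahler condition on $[\alpha_i]+\lambda_i c_1(X_i)$, and the extension of $f'\circ(X\dashrightarrow X')$ across the flipping loci) all inherit this foundational problem, so the argument cannot be repaired by citing \cite{HP,HP2} as they stand.

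For comparison, the paper's proof splits according to $\dim B$ where $X\dashrightarrow B$ is the MRC fibration. When $\dim B\leq 1$ one shows $H^{2,0}(X)=0$ and concludes by Proposition \ref{ns} (the projective N\'eron--Severi case, where the base-point-free theorem does all the work). When $\dim B=2$, one verifies that $[\omega]=\frac{1}{\lambda}[\omega_0]$ is already normalized --- if not, \cite[Lemma 3.3]{HP2} would make $c_1(K_X)+\mu[\omega]$ pseudoeffective and hence $[\alpha]$ big, contradicting $\int_X\alpha^3=0$ --- and then \cite[Theorem 1.4]{HP2} directly produces a holomorphic map $f:X\to Y$ onto a normal surface contracting exactly the $(c_1(K_X)+[\omega])$-trivial curves; no MMP, no flips, and no descent from a bimeromorphic model of $X$ are needed. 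A further nontrivial argument (rational singularities of $Y$ via relative vanishing, and the construction of a K\"ahler class $[\gamma]$ on $Y$ with $f^*[\gamma]=[\alpha]$ using K\"ahler currents from \cite{CT} and \cite[Lemmas 3.3--3.4]{HP}) is then required to establish \eqref{fact2}; your proposal's analogous step, producing $\omega_Y$ by ``contracting the trivial curves of $\gamma'$ by classical surface theory,'' glosses over the same issue of showing that the resulting class on the singular base is genuinely K\"ahler in the sense of analytic spaces.
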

\begin{proof}
We have a nef class $[\alpha]$ with $\int_X\alpha^3=0$ and with $[\alpha]+\lambda c_1(X)=[\omega_0]$ for some K\"ahler metric $\omega_0$ and some $\lambda>0$.
Recall that $K_X$ is not pseudoeffective, by Lemma \ref{psef}. Thanks to the main theorem of \cite{Br} it follows that $X$ is uniruled, i.e. covered by rational curves.
Let $X\dashrightarrow B$ be the MRC fibration of $X$ (constructed in \cite{Ca}, see also \cite{Ca2, KMM2}), which satisfies $\dim B<3$ because $X$ is uniruled.

If $\dim B=0$ then $X$ is rationally connected (and since it is K\"ahler, it must be projective thanks to \cite[Corollaire, p.212]{Ca}), and so $H^{2,0}(X)=0$ and Conjecture \ref{bpf} holds thanks to Proposition \ref{ns}.
If $\dim B=1$ then $X\to B$ is holomorphic everywhere, $B$ is a compact Riemann surface of genus at least $1$, and the general fiber $F$ is rationally connected (and hence again projective). Therefore we have $H^{1,0}(F)=H^{2,0}(F)=0$ and this easily implies that
$H^{2,0}(X)=0$ and so Conjecture \ref{bpf} holds in this case as well.

Therefore we can assume that $\dim B=2$, and the general fiber $F$ of the MRC fibration is isomorphic to $\mathbb{CP}^1$. Following \cite{HP2}, we call a K\"ahler class $[\omega]$ on $X$ normalized if $\int_F \omega=2$. We will apply the results of H\"oring-Peternell in \cite{HP,HP2}, and we are grateful to Andreas H\"oring for his help with the following arguments.

We consider the K\"ahler class $[\omega]=\frac{1}{\lambda}[\omega_0]$, so that $c_1(K_X)+[\omega]=\frac{1}{\lambda}[\alpha]$ is a nef class. Let
$$\mu=\frac{2}{\int_F\omega}>0,$$ so that
$\mu[\omega]$ is normalized. By adjunction we have $\int_F c_1(K_X)=-2$, and since $c_1(K_X)+[\omega]$ is nef we also have
$$\int_F (c_1(K_X)+\omega)\geq 0,$$
and so $\mu\leq 1$.

Assume that $[\omega]$ is not normalized, i.e. that $\mu<1$. Thanks to \cite[Lemma 3.3]{HP2} the class $c_1(K_X)+\mu[\omega]$ is pseudoeffective, and so
$c_1(K_X)+[\omega]=(c_1(K_X)+\mu[\omega])+(1-\mu)[\omega]$ is big. Since this equals $[\alpha]$, we get a contradiction to the fact that $\int_X\alpha^3=0$ (using
again \cite[Theorems 4.1 and 4.7]{Bo} as in Lemma \ref{psef}).

Therefore $[\omega]$ is normalized, and we can then apply \cite[Theorem 1.4]{HP2} and obtain a holomorphic map $f:X\to Y$ onto a normal compact complex surface $Y$,
such that $f$ has connected fibers and a curve $C\subset X$ satisfies $f(C)$ is a point if and only if $\int_C(c_1(K_X)+\omega)=0$. Therefore $-K_X$ is $f$-nef and its restriction to a generic fiber of $f$ is big. Relative Kawamata-Viehweg vanishing for complex spaces \cite[Theorem 2.1]{An} (cf. \cite{Na}) implies that $R^if_*\mathcal{O}_X\cong\mathcal{O}_Y$ for all $i>0$, and then \cite[Theorem 1]{Ko} gives that $Y$ has at worst rational singularities (the proof there uses Grothendieck duality and Grauert-Riemenschneider vanishing, but these have been extended to the analytic setting in \cite{RR, RRV} and \cite{Ta} respectively). This in turn implies that $Y$ is a K\"ahler space, for example by \cite{Fu}.

From the construction of $f$ in the proof of \cite[Theorem 1.4]{HP2}, we obtain a commutative diagram
\begin{equation}
\begin{CD}
 \Gamma @>{q}>>Z\\
@V{p}VV @VV{\nu}V \\
 X @>{f}>> Y
\end{CD}
\end{equation}
where $\Gamma$ is a compact K\"ahler manifold (in \cite{HP2} $\Gamma$ is just a compact analytic space in class $\mathcal{C}$, but we may replace it with a resolution of singularities), $p$ is a modification, $Z$ is a smooth K\"ahler surface, and $\nu$ is the contraction of an effective divisor $E\subset Z$, which is the null locus of a nef and big $(1,1)$ class $[\beta]$ on $Z$, such that $q^*[\beta]=p^*[\alpha]$.

Since $Y$ has rational singularities, it follows from \cite[Lemma 3.3]{HP} that $[\beta]=\nu^*[\gamma]$ for a $(1,1)$ class $[\gamma]$ on $Y$ (see e.g. \cite{HP} for the definition of $(1,1)$ classes on normal analytic spaces). In fact, $[\gamma]$ is a K\"ahler class (in the sense of analytic spaces). To see this, let $K$ be a K\"ahler current on $Z$ in the class $[\beta]$ which is singular only along the null locus of $[\beta]$, which exists thanks to \cite[Theorem 1.1]{CT} (the proof there simplifies vastly in the case at hand, since $\dim Z=2$). Then the pushforward current $\nu_*K$ has local potentials everywhere on $Y$ and belongs to the class $[\gamma]$ thanks to \cite[Lemma 3.4]{HP}, and it is a smooth K\"ahler metric away from $\nu(\mathrm{Exc}(\nu))$, which is a finite set. It is then easy to produce a K\"ahler metric in the class $[\gamma]$, by modifying the local potentials of $\nu_*K$ near each point in $\nu(\mathrm{Exc}(\nu))$ by taking the regularized maximum of the local potential and $|z|^2-C$ in a local chart ($C$ is a sufficiently large constant), see e.g. \cite[Lemma 3.1]{CT} or \cite[Remark 3.5]{HP}.

Since $p^*(f^*[\gamma]-[\alpha])=0$, and $p^*$ is injective, we conclude that $f^*[\gamma]=[\alpha]$, i.e. \eqref{fact2} holds. In other words, we have obtained that
$$-\lambda c_1(K_X)=[\omega_0]-f^*[\gamma],$$
which implies that $-K_X$ is $f$-ample.
\end{proof}

\section{Finite time extinction}\label{sectext}
In this section we prove Conjecture \ref{main} when $n\leq 3$. As we remarked in the introduction, it suffices to show that \eqref{point} implies \eqref{fano}.

First, we make the following general observation.

\begin{lemma}\label{triv}
$(X^n,\omega_0)$ be a compact K\"ahler manifold, let $\omega(t)$ be the solution of the K\"ahler-Ricci \eqref{krf}, defined
on the maximal time interval $[0,T)$ with $T<\infty$, and such that
\begin{equation}\label{point2}
\mathrm{diam}(X,\omega(t))\to 0,
\end{equation}
as $t\to T$. Then we have that
\begin{equation}\label{vol2}
\vol(X,\omega(t))\to 0,
\end{equation}
as well, so that the flow exhibits finite time collapsing.
\end{lemma}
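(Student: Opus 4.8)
The plan is to exploit the well-known fact that under the K\"ahler-Ricci flow with $T<\infty$ the total volume is a fixed polynomial in $t$, determined by the cohomology classes, so that finite time collapsing is equivalent to a purely cohomological statement about the limiting class. Concretely, if $\omega(t)$ solves \eqref{krf} on $[0,T)$ with $T<\infty$, then $[\omega(t)]=[\omega_0]-2\pi t c_1(X)$, hence
\[
\vol(X,\omega(t))=\frac{1}{n!}\int_X\omega(t)^n=\frac{1}{n!}\int_X\left([\omega_0]-2\pi t c_1(X)\right)^n,
\]
which extends continuously to $t=T$ with value $\frac{1}{n!}\int_X[\alpha]^n$, where $[\alpha]=[\omega_0]-2\pi T c_1(X)$ is the (nef) limiting class. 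So \eqref{vol2} holds if and only if $\int_X\alpha^n=0$. Thus it suffices to rule out $\int_X\alpha^n>0$, i.e. to show that the limiting class cannot be nef and big when the diameter collapses.

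The mechanism I would use is a lower volume bound coming from a noncollapsing-type estimate. If $[\alpha]$ is nef and big, then by \cite[Theorems 4.1 and 4.7]{Bo} (as invoked already in Lemma \ref{psef}) we have $\int_X\alpha^n>0$, so $\vol(X,\omega(t))\to c>0$. Pick any point $x_t\in X$; since the diameter goes to $0$, for $t$ close to $T$ the whole manifold $X$ is contained in the metric ball $B_{\omega(t)}(x_t,r_t)$ with $r_t=\diam(X,\omega(t))\to 0$. I would then bring in Perelman's no-local-collapsing theorem for the Ricci flow (valid on a finite time interval, with a non-collapsing constant depending only on $\omega_0$ and $T$): it gives, on the scale $r_t<\sqrt{T}$, a bound $\vol(B_{\omega(t)}(x_t,r_t))\le \kappa^{-1} r_t^{2n}$ is false in general---rather, non-collapsing bounds volume \emph{below}---so the correct tool is the complementary \emph{upper} volume bound. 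The cleanest such bound is simply that a geodesic ball of radius $r$ has volume at most that of the Euclidean ball times a curvature-dependent factor when Ricci is bounded below; but since we have no two-sided curvature control, I would instead argue as follows: the scalar curvature satisfies $R\ge R_{\min}(0)$ along the flow, so $\frac{d}{dt}\vol(X,\omega(t))=-\int_X R\,\omega(t)^n/n!\le -R_{\min}(0)\vol(X,\omega(t))$, which only gives a lower bound on volume near $T$ and is consistent with collapsing. Hence this approach needs the Bishop--Gromov inequality on the final slice: by \cite{ST} (Sesum--Tian / Zhang), the scalar curvature blows up like $(T-t)^{-1}$ precisely when the limiting class is not big, and stays bounded when it is big; if $R$ stays bounded then $\Ric$ bounded below holds uniformly, Bishop--Gromov applies, and $\vol(X,\omega(t))\le V(n,\Lambda)\,\diam(X,\omega(t))^n\to 0$, contradicting $\vol\to c>0$.

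So the key steps, in order, are: (i) write the volume as a polynomial in $t$ and reduce \eqref{vol2} to $\int_X\alpha^n=0$ for the nef limiting class $[\alpha]=[\omega_0]-2\pi T c_1(X)$; (ii) suppose for contradiction $\int_X\alpha^n>0$, so $[\alpha]$ is nef and big and $\vol(X,\omega(t))\to c>0$; (iii) invoke the scalar curvature estimate of \cite{ST} to deduce that when the limiting class is big the curvature (hence $\Ric$ from below) stays uniformly bounded as $t\to T$; (iv) apply Bishop--Gromov on each slice to get $\vol(X,\omega(t))\le C\,\diam(X,\omega(t))^n$, and let $t\to T$ using \eqref{point2} to force $\vol\to 0$, a contradiction. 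The main obstacle is step (iii)--(iv): pinning down exactly which uniform curvature bound is available as $t\to T$ in the big case, and making sure Bishop--Gromov is applied with a constant independent of $t$. An alternative that sidesteps curvature entirely would be to use directly that a nef and big class admits a K\"ahler current with positive mass concentrated on a proper subvariety, and run a lower volume bound via the flow's monotone quantities---but the cohomological computation in step (i) combined with the diameter--volume inequality is the most economical route, so that is the one I would write up.
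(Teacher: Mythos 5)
Your reduction in steps (i)--(ii) is correct and coincides with how the paper's proof implicitly begins: since $[\omega(t)]=[\omega_0]-2\pi t\,c_1(X)$, the volume $\frac{1}{n!}\int_X\omega(t)^n$ is a polynomial in $t$ converging to $\frac{1}{n!}\int_X\alpha^n$ as $t\to T$, so the whole content of the lemma is to exclude $\int_X\alpha^n>0$. The genuine gap is step (iii). There is no result in \cite{ST} (or elsewhere) asserting that the scalar curvature stays bounded as $t\to T$ when the limiting class is big; \cite{ST} concerns Perelman's estimates for the \emph{normalized} flow on Fano manifolds in the anticanonical class, a completely different setting. In fact the opposite of your claim is a theorem: by Z.~Zhang \cite{Z3}, at \emph{any} finite-time singularity of the K\"ahler-Ricci flow one has $\limsup_{t\to T}\sup_X R=+\infty$, irrespective of whether the limiting class is big. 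Moreover, even if a uniform scalar curvature bound were available it would not yield the Ricci \emph{lower} bound that Bishop--Gromov requires (scalar curvature is only the trace of Ricci). So steps (iii)--(iv) cannot be carried out and the contradiction is never reached.

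The repair is essentially the alternative you mention in passing and then discard. The paper's proof invokes \cite[Theorem 1.5]{CT}: if $\int_X\alpha^n>0$, i.e.\ the nef limiting class $[\alpha]$ is big, then $\omega(t)$ converges smoothly on the Zariski-open set $X\setminus\mathrm{Null}(\alpha)$ to a limiting K\"ahler metric $\omega_T$ there. A fixed nonempty open set carrying a limiting K\"ahler metric forces $\diam(X,\omega(t))$ to stay bounded away from zero, contradicting \eqref{point2}. No curvature bounds or comparison geometry are needed; the argument is purely the cohomological volume formula plus this convergence theorem for nef and big limiting classes.
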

\begin{proof}
As usual let $[\alpha]=[\omega_0]-2\pi Tc_1(X)$ be the limiting class along the flow.
If we had $$\int_X\alpha^n>0,$$ then \cite[Theorem 1.5]{CT} shows that on the Zariski open set $X\backslash \mathrm{Null}(\alpha)$ we have smooth convergence of $\omega(t)$ to a
limiting K\"ahler metric $\omega_T$ on this set. In particular, the diameter of $(X,\omega(t))$ cannot go to zero.
\end{proof}

From now on we assume that \eqref{point} holds.
In particular, thanks to \cite[Proposition 4.2]{CT} (or Lemma \ref{psef}), we have that $K_X$ is not pseudoeffective.
Of course the content of Conjecture \ref{main} is to show that the limiting class $\alpha$ is zero.

The following is the main result of this section:
\begin{theorem}\label{tree}
If Conjecture \ref{main2} holds for $X$, then Conjecture \ref{main} holds as well.
\end{theorem}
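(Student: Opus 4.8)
The plan is to combine the hypothesis with the reductions of Section~\ref{sectvol}. The ``if'' direction of Conjecture~\ref{main} holds unconditionally: if $[\omega_0]=\lambda c_1(X)$ for some $\lambda>0$ then $X$ is Fano, the flow \eqref{krf} exists precisely on $[0,\tfrac{\lambda}{2\pi})$ with $[\omega(t)]=(\lambda-2\pi t)c_1(X)\to 0$, and Perelman's diameter bound along the normalized K\"ahler--Ricci flow (see \cite{ST}) forces $\diam(X,\omega(t))\to 0$. So it suffices to prove that \eqref{point} implies \eqref{fano}. Assume then that $\diam(X,\omega(t))\to 0$ as $t\to T$. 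By Lemma~\ref{triv} the flow exhibits finite time collapsing, so by the assumed validity of Conjecture~\ref{main2} for $X$ there is a Fano fibration $f:X\to Y$ with $0\leq\dim Y<\dim X=n$ and a K\"ahler metric $\omega_Y$ on $Y$ (in the sense of analytic spaces) with $[\omega_0]=2\pi T c_1(X)+f^*[\omega_Y]$, where $T$ is the maximal existence time. If $\dim Y=0$ then $f^*[\omega_Y]=0$ and $[\omega_0]=2\pi T c_1(X)$, which is \eqref{fano}; so it suffices to rule out $\dim Y\geq 1$, which I intend to do by showing that in that case $\diam(X,\omega(t))$ stays bounded away from $0$, contradicting \eqref{point}.

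To this end I would adapt the argument of Song from the projective case \cite{So}. Note that $f^*\omega_Y$ is a smooth semipositive real $(1,1)$-form on $X$ (locally $\omega_Y=\ddbar\psi$ with $\psi$ smooth plurisubharmonic, so $f^*\psi$ is smooth on $X$), and that $[\omega(t)]=2\pi(T-t)c_1(X)+f^*[\omega_Y]$. Introduce the reference forms $\hat\omega_t=\tfrac{T-t}{T}\omega_0+\tfrac{t}{T}f^*\omega_Y$, which for $t<T$ are K\"ahler metrics in the class $[\omega(t)]$ and which converge to $f^*\omega_Y$ as $t\to T$; writing $\omega(t)=\hat\omega_t+\ddbar\varphi(t)$ the flow \eqref{krf} becomes the parabolic complex Monge--Amp\`ere equation $\ddt\varphi=\log\frac{(\hat\omega_t+\ddbar\varphi)^n}{\Omega}$ for a suitable fixed smooth volume form $\Omega$ with $\Ric(\Omega)=-\ddt\hat\omega_t$. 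The heart of the matter is a uniform estimate $\omega(t)\geq c\, f^*\omega_Y$ on $f^{-1}(K)$ for $t$ close to $T$, with $c=c(K)>0$, where $K$ ranges over compact subsets of the Zariski-open dense set $Y^{\circ}\subset Y$ of regular values of $f$ lying in the smooth locus of $Y$ (over which $f$ is a holomorphic submersion, so that $f^*\omega_Y$ controls exactly the horizontal directions). I expect this to follow from the maximum principle along the flow, together with a uniform $C^0$ bound on $\varphi$ (possibly after a harmless $t$-dependent normalization) -- which is where the semiample structure of $f^*[\omega_Y]$ enters -- and the usual parabolic Schwarz-lemma/second-order estimates, localized on $f^{-1}(K)$.

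Granting this estimate, the diameter lower bound is elementary: fix $y_1\in Y^{\circ}$ and $r>0$ so that $\ov{B}:=\ov{B_{\omega_Y}(y_1,r)}$ is a compact subset of $Y^{\circ}$, set $K=\ov{B}$, and take $p_1\in f^{-1}(y_1)$ together with $p_2\in f^{-1}(y_2)$ for some $y_2$ with $d_{\omega_Y}(y_1,y_2)=r$. Any path $\gamma$ in $X$ from $p_1$ to $p_2$ either stays inside $f^{-1}(\ov{B})$, in which case $f\circ\gamma$ joins $y_1$ to $y_2$ in $Y$ and $\ell_{\omega(t)}(\gamma)\geq\sqrt{c}\,\ell_{\omega_Y}(f\circ\gamma)\geq\sqrt{c}\,r$, or it reaches $f^{-1}(\partial B)$, and applying the same inequality to the initial segment of $\gamma$ up to the first such time again gives $\ell_{\omega(t)}(\gamma)\geq\sqrt{c}\,r$. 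Hence $d_{\omega(t)}(p_1,p_2)\geq\sqrt{c}\,r>0$ uniformly in $t$, contradicting \eqref{point} and completing the proof.

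The step I expect to be the main obstacle is the uniform estimate $\omega(t)\geq c\, f^*\omega_Y$ over $Y^{\circ}$ at this level of generality: in \cite{So} the base $Y$ is a normal projective variety and $\omega_Y$ comes from an ample $\mathbb{Q}$-divisor, whereas here $Y$ may be singular and non-projective and $\omega_Y$ is only a K\"ahler metric in the analytic sense, so one must run the Monge--Amp\`ere estimates on $f^{-1}(Y^{\circ})$ localized near a generic fiber, and in particular re-establish the uniform $C^0$ bound for the potential in the case where the limiting class has vanishing self-intersection and is a pullback of an analytic K\"ahler class (cf.\ \cite{SoT}). Once this is in place, the passage from the metric estimate to the diameter bound, and hence to \eqref{fano}, is routine.
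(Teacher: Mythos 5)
Your overall architecture coincides with the paper's: reduce via Lemma~\ref{triv} and the assumed Conjecture~\ref{main2} to a Fano fibration $f:X\to Y$, observe that $\dim Y=0$ gives \eqref{fano}, and rule out $\dim Y\geq 1$ by proving a uniform lower bound $\omega(t)\geq c\,f^*\omega_Y$ and then running Song's diameter argument. The class identity, the reference metrics $\hat\omega_t$, the Monge--Amp\`ere reduction and the final elementary diameter estimate are all as in the paper. However, the one step you explicitly defer --- the Schwarz-lemma estimate $\omega(t)\geq c\,f^*\omega_Y$ when $Y$ is a possibly singular, non-projective analytic space --- is precisely the content of the proof, and the route you sketch for it (localizing the Monge--Amp\`ere estimates on $f^{-1}(K)$ for $K$ compact in the regular locus $Y^{\circ}$) is not how one should proceed: a maximum principle localized on $f^{-1}(K)$ needs boundary control or cutoff functions whose error terms you have no way to absorb, since $\omega(t)$ is degenerating everywhere.

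The paper's resolution is global and avoids localization entirely. The quantity $\log\tr{\omega(t)}{(f^*\omega_Y)}$ is defined on all of $X$, and the differential inequality $(\ddt-\Delta)\log\tr{\omega(t)}{(f^*\omega_Y)}\leq C\tr{\omega(t)}{(f^*\omega_Y)}$ is a pointwise local computation; the only issue is at points $x$ where $f(x)$ is singular in $Y$, and there one writes $\omega_Y=\ddbar\eta|_U$ for a local embedding $U\hookrightarrow\mathbb{C}^N$ and applies the standard Schwarz lemma to the composite holomorphic map $\ti{f}:(f^{-1}(U),\omega(t))\to(\mathbb{C}^N,\ddbar\eta)$ between genuine K\"ahler manifolds. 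With this in hand the maximum principle is applied on all of $X$. Two further points you gloss over: (i) the $C^0$ bound $|\vp(t)|\leq C$ is indeed a simple global maximum principle (your worry about ``re-establishing'' it is unfounded, since $\hat\omega_t^n\geq(\tfrac{T-t}{T})^n\omega_0^n$ and $n\log(T-t)$ is integrable up to $T$); and (ii) the ``usual'' parabolic Schwarz argument does not close directly because $\tr{\omega(t)}{\hat\omega_t}$ only controls $(T-t)\tr{\omega(t)}{\omega_0}$, which forces the extra term $-An(T-t)(\log(T-t)-1)$ in the test function and the elementary inequality $\tr{\omega(t)}{\hat\omega_t}\geq A\log\frac{(T-t)^n\Omega}{\omega(t)^n}-C$ to absorb the volume-form term. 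Without these two ingredients your proposal does not yet constitute a proof, although it correctly identifies where the difficulty lies.
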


In particular, combining this with Theorem \ref{equiv}, Corollary \ref{two} and Theorem \ref{three}, we obtain the proof of Theorem \ref{main3}. We also see that Conjecture \ref{main} holds under the same hypotheses of Proposition \ref{ns} (a fact which was already mentioned in \cite[Remark 1.1]{So}). The proof of this theorem is a modification of the arguments in \cite{So}.

\begin{proof}
Thanks to Lemma \ref{triv} we are in the setup of Conjecture \ref{main2}, and so we have a Fano fibration $f:X\to Y$ such that
$[\omega_0]=-\lambda c_1(K_X)+f^*[\omega_Y],$ for some $\lambda>0$ and some K\"ahler metric $\omega_Y$ on $Y$ (in the sense of analytic spaces).
The maximal existence time for the flow is thus $T=\frac{\lambda}{2\pi}$, and the limiting class is $[\alpha]=f^*[\omega_Y]$.
Then $f^*\omega_Y$ is a smooth semipositive $(1,1)$ form on $X$ (this follows easily from the definition of K\"ahler metrics and holomorphic maps between analytic spaces), in the limiting class $[\alpha]$. We then write
$$\hat{\omega}_t=\frac{1}{T}((T-t)\omega_0+tf^*\omega_Y),$$
which are K\"ahler metrics for all $0\leq t<T$, and we have
$$\omega(t)=\frac{1}{T}((T-t)\omega_0+tf^*\omega_Y)+\ddbar\vp(t),$$
where $\vp(t)$ solves the parabolic complex Monge-Amp\`ere equation
\begin{equation}\label{ma}
\left\{
                \begin{aligned}
                  &\frac{\de}{\de t}\vp(t)=\log\frac{(\hat{\omega}_t+\ddbar\vp(t))^n}{\Omega}\\
                  &\vp(0)=0
                \end{aligned}
              \right.
\end{equation}
and $\Omega$ is a smooth positive volume form with $\ddbar\log\Omega=\frac{1}{T}(f^*\omega_Y-\omega_0)$.
A simple maximum principle argument gives $|\vp(t)|\leq C$ on $X\times[0,T)$.
We now want to use the usual Schwarz Lemma argument to show that on $X\times[0,T)$ we have
\begin{equation}\label{sc}
\omega(t)\geq C^{-1}f^*\omega_Y.
\end{equation}
To prove this, we first claim that at every point where $\tr{\omega(t)}{(f^*\omega_Y)}>0$ we have
\begin{equation}\label{sc2}
\left(\frac{\de}{\de t}-\Delta\right)\log\tr{\omega(t)}{(f^*\omega_Y)}\leq C\tr{\omega(t)}{(f^*\omega_Y)}.
\end{equation}
To see this, recall that by definition of a K\"ahler metric on a compact K\"ahler space, given every point $y\in Y$ there exists an open neighborhood $U$ of $y$ in $Y$ with an embedding
$U\hookrightarrow\mathbb{C}^N$, and a smooth and strictly plurisubharmonic function $\eta$ on $\mathbb{C}^N$, such that $\omega_Y$ equals the restriction of $\ddbar\eta$ to $U$. Clearly if $y$ is a smooth point of $Y$ this just says that $\omega_Y$ is a usual K\"ahler metric near $y$. Therefore if $x\in X$ is a point with $\tr{\omega(t)}{(f^*\omega_Y)}(x)>0$ and $f(x)$ is a smooth point of $Y$, then \eqref{sc2} holds near $x$ thanks to a standard ``Schwarz Lemma'' calculation (see e.g. \cite[Theorem 3.2.6]{SW}).

If on the other hand we have $\tr{\omega(t)}{(f^*\omega_Y)}(x)>0$ but $f(x)$ is a singular point of $Y$, then we choose a neighborhood $U$ of $f(x)$ as above, so that
$\omega_Y$ equals the restriction of $\ddbar\eta$ to $U$. On $f^{-1}(U)$ we then have the composite holomorphic map $\ti{f}:f^{-1}(U)\to \mathbb{C}^N$ of $f$ and the local embedding, such that on $f^{-1}(U)$ we have $f^*\omega_Y=\ti{f}^*\ddbar\eta$. Then we can apply the same Schwarz Lemma calculation as in \cite[Theorem 3.2.6]{SW}, to the
holomorphic map between K\"ahler manifolds $\ti{f}:(f^{-1}(U),\omega(t))\to (\mathbb{C}^N,\ddbar\eta)$, and \eqref{sc2} then holds on $f^{-1}(U)$.

On the other hand we also have
$$\left(\frac{\de}{\de t}-\Delta\right)\vp(t)=\dot{\vp}(t)-n+\tr{\omega(t)}{\hat{\omega}_t}\geq\log\frac{\omega(t)^n}{\Omega}-n+\frac{1}{4}\tr{\omega(t)}{(f^*\omega_Y)}+\frac{1}{2}\tr{\omega(t)}{\hat{\omega}_t},$$
provided that $t$ is sufficiently close to $T$, which we may always assume.
Therefore, if we choose $A$ large enough, we have that
\[\begin{split}
\left(\frac{\de}{\de t}-\Delta\right)(\log\tr{\omega(t)}{(f^*\omega_Y)}&-A\vp(t))\\
&\leq -\tr{\omega(t)}{(f^*\omega_Y)}-\tr{\omega(t)}{\hat{\omega}_t}-A\log\frac{\omega(t)^n}{\Omega}+An,
\end{split}\]
at all points where $\tr{\omega(t)}{(f^*\omega_Y)}>0$. Therefore
\[\begin{split}
\left(\frac{\de}{\de t}-\Delta\right)&\bigg(\log\tr{\omega(t)}{(f^*\omega_Y)}-A\vp(t)-An(T-t)(\log(T-t)-1)\bigg)\\
&\leq -\tr{\omega(t)}{(f^*\omega_Y)}-\tr{\omega(t)}{\hat{\omega}_t}-A\log\frac{\omega(t)^n}{(T-t)^n\Omega}+An,
\end{split}\]
which combined with
$$\tr{\omega(t)}{\hat{\omega}_t}\geq\frac{T-t}{T}\tr{\omega(t)}{\omega_0}\geq C^{-1}\left(\frac{(T-t)^n\Omega}{\omega(t)^n}\right)^{\frac{1}{n}}
\geq A\log\frac{(T-t)^n\Omega}{\omega(t)^n}-C,$$
gives
\[\begin{split}
\left(\frac{\de}{\de t}-\Delta\right)&\bigg(\log\tr{\omega(t)}{(f^*\omega_Y)}-A\vp(t)-An(T-t)(\log(T-t)-1)\bigg)\\
&\leq -\tr{\omega(t)}{(f^*\omega_Y)}+C,
\end{split}\]
and a simple application of the maximum principle then gives \eqref{sc}.

Lastly, using \eqref{sc} we can use the same argument as in \cite[Theorem 4.1]{So} and conclude that if $B$ is a geodesic ball of $\omega_Y$ contained in the regular part of $Y$ then the diameter of $f^{-1}(B)$ (as a subset of $X$) with respect
to $\omega(t)$ is bounded below by a small multiple of the diameter of $B$ with respect to $\omega_Y$, and hence remains bounded uniformly away from zero as $t\to T$,
a contradiction to our assumption \eqref{point}.
\end{proof}

To close, we make two simple observations.
\begin{remark}Assuming that \eqref{point} holds, then the main result of \cite{IK} shows that $H^1(X,\mathbb{R})=0$. This of course is consistent with Conjecture \ref{main}, since Fano manifolds have vanishing first Betti number.
\end{remark}
\begin{remark}
To prove Conjecture \ref{main} in general, it would be enough to show that if \eqref{point} holds then the $L^1$-type norm
$$\int_X |\omega(t)|_{\omega_0}\omega_0^n,$$
or the equivalent quantity
$$\int_X\omega(t)\wedge\omega_0^{n-1},$$
goes to zero as $t\to T$.
Indeed, any of these would imply that
$$\int_X\alpha\wedge\omega_0^{n-1}=0,$$
and the Khovanskii-Teissier inequality for nef classes (see e.g. \cite{FX})
$$\int_X\alpha\wedge\omega_0^{n-1}\geq\left(\int_X\alpha^2\wedge\omega_0^{n-2}\right)^{\frac{1}{2}}\left(\int_X\omega_0^n\right)^{\frac{1}{2}},$$
implies that $\int_X\alpha^2\wedge\omega_0^{n-2}=0.$ The result now follows from the Hodge-Riemann bilinear relations on K\"ahler manifolds, proved in \cite{DN}. Indeed, following their notation, we set
$\omega_1=\dots=\omega_{n-1}:=\omega_0$, so that the condition $\int_X\alpha\wedge\omega_0^{n-1}=0$ says that $\alpha\in P^{1,1}(X)$, while the condition
$\int_X\alpha^2\wedge\omega_0^{n-2}=0$ says that $Q(\alpha,\alpha)=0$. Since by \cite[Theorem A]{DN} the bilinear form $Q$ is positive definite on $P^{1,1}(X)$, this
implies that $\alpha=0$, as required.
\end{remark}

\end{document}